 \newtheorem{thm}{Theorem}[section]
 \newtheorem{cor}[thm]{Corollary}
 \newtheorem{lem}[thm]{Lemma}
 \newtheorem{prop}[thm]{Proposition}
 \theoremstyle{definition}
 \theoremstyle{remark}
 \newtheorem{rem}{Remark}
 \numberwithin{equation}{section}
\newcommand{\freg}{\mathsf F_{\textnormal {reg}}}
 \DeclareMathOperator{\Cos}{Cos}
\DeclareMathAlphabet{\mathpzc}{OT1}{pzc}{m}{it}
\begin{document}

\title[From snapping-out BM to Walsh's spider]{From snapping out Brownian motions to Walsh's spider processes on star-like graphs}

\author[A. Bobrowski]{Adam Bobrowski}

\address{
Lublin University of Technology\\
Nadbystrzycka 38A\\
20-618 Lublin, Poland}
\email{a.bobrowski@pollub.pl}

\author[E. Ratajczyk]{El\.zbieta Ratajczyk}
\address{
Lublin University of Technology\\
Nadbystrzycka 38A\\
20-618 Lublin, Poland}

\email{e.ratajczyk@pollub.pl}

\newcommand{\cxi}{(\xi_i)_{i\in \N} }
\newcommand{\lam}{\lambda}
\newcommand{\eps}{\varepsilon}
\newcommand{\ud}{\, \mathrm{d}}
\newcommand{\mud}{\mathrm{d}}
\newcommand{\pr}{\mathbb{P}}
\newcommand{\s}{\mathcal{S}}
\newcommand{\h}{\mathcal{H}}
\newcommand{\ai}{\mathcal{I}}
\newcommand{\R}{\mathbb{R}}
\newcommand{\C}{\mathbb{C}}
\newcommand{\Z}{\mathbb{Z}}
\newcommand{\N}{\mathbb{N}}
\newcommand{\Y}{\mathbb{Y}}
\newcommand{\e}{\mathrm {e}}
\newcommand{\tif}{\widetilde {f}}
\newcommand{\Id}{{\mathrm{Id}}}
\newcommand{\cic}{C_{\mathrm{mp}}}
\newcommand{\pol}{{\textstyle \frac 12}}
\newcommand{\es}{\textnormal T}

\newcommand{\cod}{\mathfrak C_{\mathrm{odd}}[-\infty,\infty]}
\newcommand{\cev}{\mathfrak C_{\mathrm{even}}[-\infty,\infty]}
\newcommand{\cevr}{C_{\mathrm{even}}(\mathbb{R})}
\newcommand{\codr}{C_{\mathrm{odd}}(\mathbb{R})}
\newcommand{\cez}{C_0(0,1]}
\newcommand{\fod}{f_{\mathrm{odd}}} 
\newcommand{\fev}{f_{\mathrm{even}}} 
\newcommand{\sem}[1]{\mbox{$\left \{\e^{t{#1}}, {t \ge 0}\right \}$}}
\newcommand{\semi}[1]{\mbox{$\left ({#1}\right )_{t > 0}$}}
\newcommand{\semt}[2]{\mbox{$\left (\e^{t{#1}} \otimes_\varepsilon \e^{t{#2}} \right )_{t \ge 0}$}}
\newcommand{\tr}{\textcolor{red}}
\newcommand{\wt}{\widetilde}

\newcommand{\tcm}{\textcolor{magenta}}
\newcommand{\ecm}{\textcolor{olive}}
\newcommand{\tcb}{\textcolor{blue}}
\newcommand{\dx}{\ \textrm {d} x}
\newcommand{\dy}{\ \textrm {d} y}
\newcommand{\dz}{\ \textrm {d} z}
\newcommand{\di}{\textrm{d}}
\newcommand{\tcg}{\textcolor{green}}
\newcommand{\lc}{\mathfrak L_c}
\newcommand{\ls}{\mathfrak L_s}
\newcommand{\grat}{\lim_{t\to \infty}}
\newcommand{\grar}{\lim_{r\to 1-}}
\newcommand{\graR}{\lim_{R\to 1+}}
\newcommand{\grak}{\lim_{\kappa \to \infty}}
\newcommand{\gra}{\lim_{x\to \infty}}
\newcommand{\grae}{\lim_{\eps \to 0}}
\newcommand{\gran}{\lim_{n\to \infty}}
\newcommand{\rez}[1]{\left (\lam - #1\right)^{-1}}
\newcommand{\papa}{\hfill $\square$}
\newcommand{\papap}{\end{proof}}
\newcommand {\x}{\cerf}
\newcommand{\aex}{A_{\mathrm ex}}
\newcommand{\jcg}[1]{\left ( #1 \right )_{n\ge 1} }
\newcommand{\injtp}{\x \hat \otimes_{\varepsilon} \y}
\newcommand{\pin}{\|_{\varepsilon}}
\newcommand{\mc}{\mathcal}
\newcommand{\inter}{\left [0, 1\right ]}
\newcommand{\ha}{\mathfrak {H}}
\newcommand{\dom}[1]{D(#1)}
\newcommand{\mquad}[1]{\quad\text{#1}\quad}
\newcommand{\lil}{\lim_{\lam \to \infty}}
\newcommand{\lilz}{\lim_{\lam \to 0}}

\newcommand{\al}{\alpha}
\newcommand{\ce}{\mathcal C}
\newcommand{\cerl}{C[-\infty,0]}

\newcommand{\cerp}{C[0,\infty]}
\newcommand{\cer}{C[-\infty,\infty]}
\newcommand{\cerf}{\mathfrak  C(\R_\sharp)}
\newcommand {\xo}{\mathfrak  C_{\textnormal{ov}}(\R_\sharp)}
\newcommand{\cerr}{\mathfrak C_R^\alpha}
\newcommand{\cef}{\mathfrak C_F^\alpha}
\newcommand{\fo}{f_{\textrm{o}}}
\newcommand{\fe}{f_{\textrm{e}}}
\newcommand{\wh}{\widehat}

\newcommand{\rla}{R_\lam}
\newcommand{\ced}{C_{\textnormal{D}}}

\makeatletter
\newcommand{\normt}{\@ifstar\@normts\@normt}
\newcommand{\@normts}[1]{%
  \left|\mkern-1.5mu\left|\mkern-1.5mu\left|
   #1
  \right|\mkern-1.5mu\right|\mkern-1.5mu\right|
}
\newcommand{\@normt}[2][]{%
  \mathopen{#1|\mkern-1.5mu#1|\mkern-1.5mu#1|}
  #2
  \mathclose{#1|\mkern-1.5mu#1|\mkern-1.5mu#1|}
}
\makeatother

\thanks{Version of \today}
\subjclass{35B06, 46E05, 47D06, \\ 47D07, 47D09}
 \keywords{Walsh's spider process, snapping-out Brownian motion, skew Brownian motion, invariant subspaces, transmission conditions}

\newcommand{\Gs}{\mathfrak G_\textnormal{s}}
\newcommand{\G}{\mathfrak G_\ka}
\newcommand{\F}{{\mathfrak F_{\ka}}}
\newcommand{\cfk}{(f_i)_{i\in \mc K}}
\newcommand{\wcfk}{ ( \widetilde{f}_i)_{i\in \mc K}}
\newcommand{\cgk}{\left ( g_i \right )_{i\in \mc K}}
\newcommand{\sumi}{\sum_{i\in \mc K}}
\newcommand{\sumj}{\sum_{j\in \mc K}}
\newcommand{\sumjni}{\sum_{j\not =i}}
\newcommand{\slam}{\sqrt{\lam}}

\newcommand{\ka}{k}
\newcommand{\aso}{A^{\textnormal{s-o}}_\mathpzc p}
\newcommand{\asoc}{A^{\textnormal{s-o}}_\mathpzc c}
\newcommand{\asoce}{A^{\textnormal{s-o}}_{\mathpzc c(\eps)}}
\newcommand{\asoe}{A^{\textnormal{s-o}}_{\mathpzc p(\eps)}}
\newcommand{\aws}{A^{\textnormal{sp}}_{\mathpzc q}}
\newcommand{\awsa}{A^{\textnormal{sp}}_{\alpha(\mathpzc c)}}

\newcommand{\awsp}{A^{\textnormal{sp}}_{\mathpzc {q(p)}}}

\newcommand{\kak}{K_{1,\ka}}
\newcommand{\Fze}{\mathfrak F_\ka^0}

\newcommand{\rezasoe}{(\lam -\asoe)^{-1}}
\newcommand{\semasoe}{\{\e^{t\asoe},t \ge 0\}}
\newcommand{\rezw}{(\lam -\awsp)^{-1}}

\begin{abstract} By analyzing matrices involved, we prove that a snapping-out Brownian motion with large permeability coefficients is a good approximation of Walsh's spider process  on the star-like graph $K_{1,\ka}$. Thus, the latter process can be seen as a Brownian motion perturbed by a trace of semi-permeable membrane at the graph's center. \end{abstract}

\maketitle

\section{Introduction}\label{intro}

\subsection{Sticky snapping-out Brownian motion}
\label{sec:sso}
Given a natural number $\ka \ge 2$ and non-negative parameters 
\[ a_i,b_i, c_i, \qquad i  \in \mc K \coloneqq \{1,\dots, \ka\}\]
with $b_i>0$, we think of the following Markov process on the compact space
\[ S_\ka \coloneqq \bigcup _{i \in \mc K}( \{i\}\times [0,\infty]). \]
While on the $i$th copy of $[0,\infty]$, our process is initially indistinguishable from the \emph{sticky Brownian motion} with stickiness coefficient $a_i/b_i$, as described, for example, in \cite{liggett} (see also \cite{newfromold}); in the particular case of $a_i=0$, the sticky Brownian motion is the reflected Brownian motion. However, when the time spent at the $i$th copy of $0$ exceeds an exponential time (independent of the Brownian motion) with parameter $c_i$, the process jumps to one of the points $(j,0), j \not = i$, all choices being equally likely. At the moment of jump the process forgets its past and starts to behave like a sticky Brownian motion on the $j$th copy of $[0,\infty]$, and so on (see Figure \ref{rys1}).

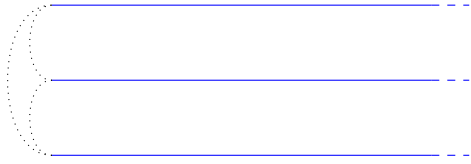
\begin{figure}

\begin{tikzpicture}
      
  \foreach \n in {1,...,3}{
  \draw [blue,-] (0,\n) -- (5,\n);  \draw [blue,dashed] (5,\n) -- (5.5,\n); 
}  \draw [black,dotted] (0,1) to [out=180,in=180] (0,2);   \draw [black,dotted] (0,2) to [out=180,in=180]  (0,3); \draw [black,dotted] (0,1) to [out=180,in=180] (0,3);   
      \end{tikzpicture}
\caption{\footnotesize{Sticky snapping out Brownian motion  is a Feller process on  $\ka$ copies of $[0,\infty]$ (here $\ka =3$), which on the $i$th copy behaves like a one-dimensional sticky Brownian motion with stickiness coefficient $a_i/b_i$.  After spending enough time at $(0,i)$ the process jumps to one of the points $(0,j), j\not =i$ to continue its motion on the corresponding copy of $[0,\infty]$, and so on. Times between jumps are governed by parameters $c_i$.}}\label{rys1} 
\end{figure}

The so-defined process has Feller property and thus can be characterized by means of a Feller generator in $C(S_\ka)$, the space of continuous functions on $S_\ka$.  To describe this generator, say, $\aso$ (subscript $\mathpzc p $ is a shorthand for the ordered set of parameters $a_i,b_i,c_i$ described above), in detail, we introduce first the space $\cerp$ of real-valued, continuous functions $f$ on $[0,\infty)$ such that the limits $\gra f(x) $ exist and are finite, and note that  $C(S_\ka)$ is isometrically isomorphic to  
\[ \F \coloneqq (\cerp )^\ka , \]
the Cartesian product of $\ka$ copies of $\cerp$; we  identify a sequence $\cfk$ with the function $f: S_\ka \to \R$ given by $f(i,x) = f_i(x), i\in \mc K, x\in [0,\infty)$. Then, we define the domain $\dom{\aso}$  of $\aso$ as composed of $\cfk\in \F$ such that 
\begin{enumerate} 
\item  each $f_i$ is twice continuously differentiable with $f_i'' \in \cerp $,
\item we have
\begin{equation}a_if_i''(0) - b_i f_i'(0) = c_i  \Big[  {{\textstyle \frac 1{\ka-1}} \sum\limits_{j \neq i}} f_j(0) - f_i(0) \Big ], \qquad i \in \mc K. \label{intro:1}\end{equation}
\end{enumerate}  
Moreover, for such $\cfk$, we agree that \( \aso \cfk = \left ( f_i'' \right )_{i \in \mc K} .\)

Boundary/transmission conditions of the type  \eqref{intro:1} have been studied and employed extensively by a number of authors in a variety of contexts (see e.g. the abundant bibliography in \cite{knigazcup}). Among more recent literature involving relatives of \eqref{intro:1}, one could mention the model  of inhibitory synaptic receptor dynamics of P.~Bressloff
\cite{bressloff2023}; see also \cite{bressloff2022}, and references given in these papers. It seems that the probabilistic meaning of relations \eqref{intro:1},  in the case of $\ka =2, a_1=a_2=0$,  has been first explained in \cite{bobmor}, where they were put in the context of Feller boundary conditions; the subsequent \cite{lejayn} provides a construction of the underlying process and introduces the name \emph{snapping-out} Brownian motion.

Our main theorem in this paper says that snapping-out Brownian motion with very large coefficients $c_i$ can be used as an approximation for the famous Walsh's spider process.

\subsection{Walsh's sticky process}\label{skew}
\begin{figure}
\begin{tikzpicture}
      \node[circle,fill=blue,inner sep=0pt,minimum size=2pt] at (360:0mm) (center) {};
    \foreach \n in {1,...,8}{
        \node [circle,fill=blue,inner sep=0pt,minimum size=0.1pt] at ({\n*360/8}:2cm) (n\n) {};
        \draw [blue,dashed](center)--(n\n);}
     \node[circle,fill=blue,inner sep=0pt,minimum size=2pt] at (360:0mm) (center) {};
    \foreach \n in {1,...,8}{
        \node [circle,fill=blue,inner sep=0pt,minimum size=0.1pt] at ({\n*360/8}:1.5cm) (n\n) {};
        \draw [blue,thick](center)--(n\n);}   
                 \node [above] at (4,-0.3) {$\beta f''(0)= \displaystyle\sum_{i=1}^\ka \alpha_i f_i'(0)$}; 
        \node [below] at (0,1.5) {\includegraphics[scale=0.165]{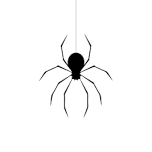}};\end{tikzpicture}
\caption{\footnotesize{The infinite star-like graph $\kak$  with $\ka=8$ edges. Walsh's sticky process on $\kak$ is a Feller process whose behavior at the graph's center is characterized by the boundary condition visible above; outside of the center, on each of the edges, the process behaves like a standard one-dimensional Brownian motion.}}\label{rys2} 
\end{figure}

Walsh's sticky process (a generalization of Walsh's spider process) is a Feller process on the space obtained from $S_\ka$ by lumping together all the points $(i,0)\in S_\ka, i \in \mc K$, that is, on the infinite metric graph $\kak$ depicted at Figure~\ref{rys2}. The space $C(\kak)$ of continuous functions on $\kak$ can be identified with the subspace \[ \Fze \coloneqq \{ \cfk \in \F\,|  f_i(0)=f_j(0), i,j \in \mc K\} \subset \F. \]
The generator, say, $\aws$, of Walsh's sticky process is characterized by non-negative numbers $\beta $ and $\alpha_i, i \in \mc K$ such that 
 \( \beta + \sumi   \alpha_i = 1 \) --- subscript $\mathpzc q$ denotes the ordered set of such numbers.  Namely, see  \cite{kostrykin2}*{Thm. 2.2},
we define the domain $\dom{\aws}\subset \Fze$ of $\aws$ as composed of $\cfk\in \Fze$ such that 
\begin{enumerate} 
\item each $f_i$ is twice continuously differentiable with $f_i'' \in \cerp $,
\item $f_i''(0)=f_j''(0), i,j\in \mc K$,
\item $\beta f''(0) = \sum\limits_{i \in \mc K}\alpha_i f_i'(0),$
\end{enumerate} and agree that 
\( \aws \cfk =  \left ( f_i'' \right )_{i \in \mc K} .\)
Notably, by condition (b) above, $\aws f$ belongs to $\Fze$ and in particular it is meaningful to speak of $f''(0)$ for $f\in \dom{\aws}$, even though $f'(0)$ is not well-defined.

The related process was first introduced in the case of $\ka =2 $ and $\beta=0$ by Walsh in \cite{walsh} (see also \cite{ito}) under the name of \emph{skew} Brownian motion: it differed from the standard Brownian motion on $\R$ only in the fact that signs of its excursions from $0$ were determined by independent Bernoulli variables. In an informal description of the process generated by $\aws$ in the general case of $\ka \ge 2$ (still with $\beta=0$) we think of the graph as the spider's web, and 
of $\alpha_i$ as the probability that a spider passing through graph's center will continue its movement on the $i$th edge of the graph; $\beta$ is an additional parameter (playing a similar role to $a_i$s of \eqref{intro:1}; this fact is also reflected in \eqref{intro:2}) that tells us how sticky the graph's center is. For more on the Walsh's process see \cite{barlow,kostrykin2012,yor97,manyor}.

\newcommand{\rysunki}{
\begin{figure}
\begin{tikzpicture}
\begin{scope}[scale=0.5]      
  \foreach \n in {1,...,3}{
  \draw [blue] (0,\n) -- (5,\n); \draw [blue] (5,\n) -- (5.5,\n);
}  \draw [black,dotted] (0,1) to [out=180,in=180] (0,2);   \draw [black,dotted] (0,2) to [out=180,in=180]  (0,3); \draw [black,dotted] (0,1) to [out=180,in=180] (0,3);  \draw [black,thick,->] (6.5,2) -- (9,2); \node [above] at (7.75,2) {$\eps \to 0$}; \end{scope}
\begin{scope}[shift={(6,1)}]
    \node[circle,fill=blue,inner sep=0pt,minimum size=2pt] at (360:0mm) (center) {};
    \foreach \n in {1,...,3}{
        \node [circle,fill=blue,inner sep=0pt,minimum size=0.1pt] at ({\n*360/3}:1.5cm) (n\n) {};
        \draw [blue](center)--(n\n);}
     \node[circle,fill=blue,inner sep=0pt,minimum size=2pt] at (360:0mm) (center) {}; \end{scope}
      \end{tikzpicture}
\caption{\footnotesize{State-space collapse. As permeability coefficients $c_i$ become infinite, times spent at the points $(i,0), i \in \mc K$ before jumps become shorter and shorter. As a result, in the limit all these points are lumped together and $S_\ka$ becomes $\kak$ (here $\ka =3$).}}\label{rys3} 
\end{figure} 

\begin{figure}
\begin{tikzpicture}
\begin{scope}[scale=0.5]      
  \foreach \n in {0,...,4}{
  \draw [blue] (0,\n) -- (5,\n); \draw [blue] (5,\n) -- (5.5,\n);
}  \draw [black,dotted] (0,1) to [out=180,in=180] (0,2);   
 \draw [black,dotted] (0,0) to [out=180,in=180] (0,1);
  \draw [black,dotted] (0,3) to [out=180,in=180] (0,4);  
\draw [black,dotted] (0,2) to [out=180,in=180]  (0,3);
 \draw [black,dotted] (0,1) to [out=180,in=180] (0,3); 
  \draw [black,dotted] (0,0) to [out=180,in=180] (0,3); 
    \draw [black,dotted] (0,0) to [out=180,in=180] (0,2); 
        \draw [black,dotted] (0,0) to [out=180,in=180] (0,4);
         \draw [black,dotted] (0,1) to [out=180,in=180] (0,4);  
                  \draw [black,dotted] (0,2) to [out=180,in=180] (0,4);  
  \draw [black,thick,->] (6.5,2) -- (9,2); \node [above] at (7.75,2) {$\eps \to 0$}; \end{scope}
\begin{scope}[shift={(6,1)}]
    \node[circle,fill=blue,inner sep=0pt,minimum size=2pt] at (360:0mm) (center) {};
    \foreach \n in {1,...,5}{
        \node [circle,fill=blue,inner sep=0pt,minimum size=0.1pt] at ({\n*360/5}:1.6cm) (n\n) {};
        \draw [blue](center)--(n\n);}
     \node[circle,fill=blue,inner sep=0pt,minimum size=2pt] at (360:0mm) (center) {}; \end{scope}
      \end{tikzpicture}
\caption{\footnotesize{State-space collapse. As permeability coefficients $c_i$ become infinite, times spent at the points $(i,0), i \in \mc K$ before jumps become shorter and shorter. As a result, in the limit all these points are lumped together and $S_\ka$ becomes $\kak$ (here $\ka =3$).} }\label{rys3} 
\end{figure} }

\begin{figure}
\begin{tikzpicture}
\begin{scope}[scale=0.5]      
  \foreach \n in {0,...,4}{
  \draw [blue] (0,\n/2+1) -- (5.4,\n);
} 
 \draw [black,densely dotted] (0,1.5) to [out=180,in=180] (0,2);   
  \draw [black,densely dotted] (0,1.5) to [out=180,in=180] (0,1);   
   \draw [black,densely dotted] (0,1.5) to [out=180,in=180] (0,3);    \draw [black,densely dotted] (0,1.5) to [out=180,in=180] (0,2.5);   
    \draw [black,densely dotted] (0,1) to [out=180,in=180] (0,2.5);    \draw [black,densely dotted] (0,2) to [out=180,in=180] (0,2.5);  
     \draw [black,densely dotted] (0,2.5) to [out=180,in=180] (0,3);     \draw [black,densely dotted] (0,1) to [out=180,in=180] (0,3);   
     \draw [black,densely dotted] (0,2) to [out=180,in=180] (0,3);   
 \draw [black,densely dotted] (0,1) to [out=180,in=180] (0,2);   
  \draw [black,thick,->] (6.5,2) -- (9,2); \node [above] at (7.75,2) {$\eps \to 0$}; \end{scope}
\begin{scope}[shift={(6,1)}]
    \node[circle,fill=blue,inner sep=0pt,minimum size=2pt] at (360:0mm) (center) {};
    \foreach \n in {1,...,5}{
        \node [circle,fill=blue,inner sep=0pt,minimum size=0.1pt] at ({\n*360/5}:1.6cm) (n\n) {};
        \draw [blue](center)--(n\n);}
     \node[circle,fill=blue,inner sep=0pt,minimum size=2pt] at (360:0mm) (center) {}; \end{scope}
      \end{tikzpicture}
\caption{\footnotesize{State-space collapse. As permeability coefficients $c_i$ become infinite, times spent at the points $(i,0), i \in \mc K$ before jumps become shorter and shorter. As a result, in the limit all these points are lumped together and $S_\ka$ becomes $\kak$ (here $\ka =5$).} }\label{rys3} 
\end{figure}

\subsection{The main result}
Let $\mathpzc p$ be a fixed set of parameters for $\aso$, and for $\eps >0 $ let $\mathpzc p(\eps)$ be the same set with  with $c_i$s replaced by $\eps^{-1}c_i, i\in \mc K$.   Our first limit theorem (Theorem \ref{conv}) says that the semigroups generated by $\asoe$ converge, as $\eps \to 0$, to the semigroup of sticky Walsh's process with parameters
\begin{equation}\label{intro:2} \alpha_i \coloneqq d b_ic_i^{-1}, i \in \mc K \mquad { and } 
\beta\coloneqq d \sumj a_jc_j^{-1} \end{equation}  where $ d\coloneqq \frac 1{\sumj (a_j+b_j)c_j^{-1}}$ is a normalizing constant; 
this set of parameters will be denoted $\mathpzc {q(p)}$.
In the case of $\ka=2$ and $a_1=a_2=0$ the result described above has been proved in \cite{tombatty}, and later reproduced in \cite{knigazcup}; see also \cite{aberek} for a recent continuation.

To explain the meaning of the theorem we note first that the state-space of the snapping-out Brownian motion  can also be thought of as the $\kak$ graph, provided that we imagine that at the graph's center there is a multi-faceted, semipermeable membrane, and thus we distinguish between positions `virtually at the graph's center' but on the $i$th edge, $i\in \mc K$ (we are thus doing a reverse process to that of lumping points).  In this interpretation, $c_i$ is the permeability coefficient, telling us how quickly a particle diffusing on the $i$th edge can filter through the membrane to continue its chaotic motion on the other side. By replacing $c_i$ by $\eps^{-1}c_i$ for all $i\in \mc K$ and letting $\eps \to 0$ we make the membrane completely permeable, and thus the limit process's state-space reduces to $\kak $ (see Figure \ref{rys3}). Formulae \eqref{intro:2} show that despite the apparent absence of the membrane, in the limit there remains some kind of asymmetry in the way particles approaching the graph's center from different edges pass through this center. Thus, Walsh's sticky process can be seen as a process with a trace of semipermeable membrane at the graph's center.  

On the more technical side, Theorem \ref{conv} says that the semigroups $\semasoe$, which are defined on 
$C(S_\ka)$, converge only on $C(\kak)\subset C(S_\ka)$. In Sections \ref{sec:coo} and \ref{sec:coc}, devoted to the case of $a_i =0, i \in \mc K$, we complement this theorem with information on convergence outside of $C(\kak)$, and on convergence of the related cosine families --- see  Theorems \ref{coo:thm1} and \ref{coc:thm1}, and Corollary \ref{coc:cor1}.  An estimate of the growth of stochastic matrices involved constitutes a key element in this analysis.

The main results are preceded with Section \ref{sec:tgt} where, as a preparation, we prove two generation theorems, and  
 Section \ref{sketch},  where they are put into the perspective of the general theory of convergence of semigroups and cosine families. 

\section{Two generation theorems}\label{sec:tgt}
In this section we show that operators defined in Introduction are indeed Feller generators; in particular, we compute their resolvents which will constitute a key to our main limit theorem. We start with a linear algebra lemma.

\begin{lem}\label{lemik} Let $A_i,B_i,C_i, i \in \mc K$ be given constants such that $A_i>0$. Then, for any $\eps > 0$ there is precisely one solution $(D_i(\eps))_{i\in \mc K}\in \R^\ka $ to the system      
\begin{equation}\label{tgt:1}
\eps A_i D_i(\eps) = \eps B_i +  \textstyle {\frac 1{\ka -1 }} {\textstyle\sum\limits_{j \neq i }}(C_j +D_j(\eps) ) - C_i - D_i(\eps), \qquad i \in \mc K. \end{equation}
Moreover, the limits $\displaystyle \grae D_i(\eps), i \in \mc K$ exist and are finite. \end{lem}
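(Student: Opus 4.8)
The plan is to read \eqref{tgt:1} as a linear system $M(\eps)D(\eps)=v(\eps)$ whose matrix degenerates as $\eps\to 0$, but in a direction matched by the right-hand side, so that the limit can still be extracted.

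\emph{Existence and uniqueness.} I would first move the terms containing the unknowns to the left-hand side, rewriting \eqref{tgt:1} as $M(\eps)D(\eps)=v(\eps)$ with $v_i(\eps)=\eps B_i+\frac1{\ka-1}\sumjni C_j-C_i$ and $M(\eps)=\eps\,\mathrm{diag}(A_i)+M_0$, where $M_0$ carries $1$'s on the diagonal and $-\frac1{\ka-1}$ off it; equivalently $M_0=\frac1{\ka-1}(\ka I-J)$, $J$ being the all-ones matrix. Then $M_0$ is symmetric and positive semidefinite with $\Ker M_0=\R\mathbf 1$ (here $\mathbf 1$ is the all-ones vector), since $x^\top M_0x=\frac1{\ka-1}\bigl(\ka\abs{x}^2-(\mathbf 1^\top x)^2\bigr)\ge 0$, with equality exactly for $x$ proportional to $\mathbf 1$. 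Because every $A_i>0$, the matrix $\eps\,\mathrm{diag}(A_i)$ is positive definite for $\eps>0$, and hence so is $M(\eps)$; in particular $M(\eps)$ is invertible, which yields existence and uniqueness of $D(\eps)$.

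\emph{Reduction.} The substitution $E_i(\eps)\coloneqq C_i+D_i(\eps)$ transforms \eqref{tgt:1} into $M(\eps)E(\eps)=\eps\,b$ with $b\coloneqq(B_i+A_iC_i)_{i\in\mc K}$; the point is that now the right-hand side is $O(\eps)$ in \emph{every} direction. Multiplying on the left by $\mathbf 1^\top$ and using $\mathbf 1^\top M_0=0$ gives $\eps\sumi A_iE_i(\eps)=\eps\sumi b_i$, hence the identity $\sumi A_iE_i(\eps)=\sumi b_i$ valid for every $\eps>0$; since $\sumi A_i>0$, this pins down the component of $E(\eps)$ along $\mathbf 1$ in terms of the rest.

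\emph{The limit.} This is the one genuinely delicate step, because $M(0)=M_0$ is singular. I would split $\R^\ka=\R\mathbf 1\oplus\mathbf 1^\perp$, write $E(\eps)=c(\eps)\mathbf 1+w(\eps)$ with $w(\eps)\in\mathbf 1^\perp$, and project $M(\eps)E(\eps)=\eps b$ by the orthogonal projection $P$ onto $\mathbf 1^\perp$; using $M_0\mathbf 1=0$ and $PM_0=M_0$ this becomes $M_0w(\eps)=\eps\bigl(Pb-P\,\mathrm{diag}(A_i)E(\eps)\bigr)$. By the identity from the previous step the right-hand side equals $\eps\bigl(\tilde u+\tilde L w(\eps)\bigr)$ for a fixed vector $\tilde u\in\mathbf 1^\perp$ and a fixed linear map $\tilde L$ of $\mathbf 1^\perp$, so $\bigl(M_0|_{\mathbf 1^\perp}-\eps\tilde L\bigr)w(\eps)=\eps\tilde u$; as $M_0|_{\mathbf 1^\perp}$ is invertible, the left operator is invertible with bounded inverse for small $\eps$, forcing $w(\eps)=O(\eps)\to 0$. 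Then $c(\eps)\to(\sumj b_j)(\sumj A_j)^{-1}$, hence $E(\eps)\to(\sumj b_j)(\sumj A_j)^{-1}\mathbf 1$, and finally
\[
\grae D_i(\eps)=\frac{\sumj(B_j+A_jC_j)}{\sumj A_j}-C_i,\qquad i\in\mc K,
\]
which in particular shows the limits exist and are finite. I expect the main obstacle to be precisely the realization that the singularity of $M_0$ is absorbed by the vanishing of $\eps b$ along $\Ker M_0$, while the conserved quantity $\sumi A_iE_i(\eps)=\sumi b_i$ supplies the missing equation that fixes the $\mathbf 1$-component of the limit.
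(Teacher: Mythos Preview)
Your argument is correct. The existence/uniqueness via positive definiteness of $M(\eps)=\eps\,\mathrm{diag}(A_i)+M_0$ is clean, the conserved quantity $\sumi A_iE_i(\eps)=\sumi b_i$ is exactly the paper's observation that $\sumi A_iD_i(\eps)=\sumi B_i$, and the perturbation step on $\mathbf 1^\perp$ goes through because $M_0|_{\mathbf 1^\perp}$ is genuinely invertible there.

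Your route differs from the paper's, though. The paper does not use the symmetry of $M_0$ or any orthogonal splitting. Instead, after recording the same conserved quantity, it eliminates the variable with the largest $A_i$ (say $A_\ka$) by substituting $D_\ka(\eps)=A_\ka^{-1}\bigl(\sumi B_i-\sum_{i\neq\ka}A_iD_i(\eps)\bigr)$ into the remaining $\ka-1$ equations, and then shows that the resulting $(\ka-1)\times(\ka-1)$ system has the form $(I-O_\eps)x=y_\eps$ with $\|O_\eps\|<1$ in the max norm, uniformly in $\eps\ge 0$; a Neumann-series argument gives invertibility and continuity in $\eps$. Your spectral/perturbative approach is arguably more transparent about \emph{why} the degeneracy of $M_0$ is harmless (the right-hand side vanishes along $\Ker M_0$), and it yields the explicit limit $\grae D_i(\eps)=\bigl(\sumj(B_j+A_jC_j)\bigr)\big/\bigl(\sumj A_j\bigr)-C_i$, which the paper does not state. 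The paper's contraction estimate, on the other hand, is entirely elementary (no inner product, no spectral gap) and gives invertibility for all $\eps\ge 0$ at once rather than only for small $\eps$.
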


\begin{proof} The idea is to rewrite \eqref{tgt:1} so that uniqueness and convergence of $D_i(\eps)$ becomes evident. To this end, we let
\begin{equation}\label{tgt:2} D(\eps) = \sumi A_i D_i(\eps )\end{equation}
and note that, by \eqref{tgt:1}, 
\( D(\eps )= \sumi B_i. \)
It follows that $D(\eps)$ in fact does not depend on $\eps$ and we will write simply $D$ instead.

Without loss of generality, we assume from now on that 
\( A_\ka= \max_{i\in \mc K} A_i.\)
 Then, by substituting $\frac D{A_\ka} - \sum_{i\not = \ka} \frac {A_i}{A_\ka} D_i(\eps)$ for $D_\ka(\eps)$ into \eqref{tgt:1}, we obtain the following system of relations 
 \begin{align}
\label{tgt:3} \Big(1+ \eps A_i + \tfrac {A_i}{(\ka -1)A_\ka} \Big) D_i(\eps)&= \eps B_i +   \textstyle {\frac 1{\ka -1 }} {\displaystyle\sumjni} C_j - C_i +    \tfrac D{(\ka -1)A_\ka} \\
 &\phantom{=}+   \textstyle {\frac 1{\ka -1 }} {\displaystyle \sum_{j\not =i,k}} \Big(1- {\textstyle \frac {A_j}{A_\ka}}\Big) D_j (\eps), \nonumber \end{align}
involving variables with the first $\ka -1$ indexes \[ i \in \mc L\coloneqq \{1,\dots, \ka -1 \}. \]
This system is easier to handle. To see this, we equip  $\R^{\ka -1 }$ with the norm $\|(\xi_i)_{i\in \mc L}\|\coloneqq \max_{i\in \mc L} |\xi_i|.$ Then, for any $\eps \ge 0 $, the norm of the linear operator $O_{\eps}:\R^{\ka -1 } \to \R^{\ka -1 }$ given by 
\[ O_{\eps} (\xi_i)_{i\in L} = \Big({\textstyle\frac 1{\ka -1 }\frac 1{m_i(\eps)} {\displaystyle\sum_{j\in \mc L\setminus\{i\}}}} 
\Big (1- {\textstyle \frac {A_j}{A_\ka}}\Big) \xi_j \Big)_{i\in L}\]
where 
\( m_i(\eps )\coloneqq 1+ \eps A_i + {\textstyle \frac {A_i}{(\ka -1) A_\ka}}>1,\)
is smaller than $1$. For, $ \|O_{\eps} \| $ does not exceed
\begin{align*}\max_{i\in \mc L} {\textstyle\frac 1{\ka -1 }\frac 1{m_i(\eps)}} \sum_{j\in \mc L\setminus\{i\}} 
 \Big(1- {\textstyle \frac {A_j}{A_\ka}}\Big)<  \max_{i\in \mc L} {\textstyle\frac 1{\ka -1 }} \sum_{j\in \mc L\setminus\{i\}} 
 \Big(1- {\textstyle \frac {A_j}{A_\ka}}\Big)< {\textstyle \frac {k-2}{\ka -1 }} < 1.\end{align*}
Hence, $I - O_{\eps}$ is invertible and so \eqref{tgt:1} has the unique solution  
\begin{equation}
\left ( D_i(\eps) \right )_{i\in L} = (I - O_{\eps})^{-1} \left ( E_i(\eps) \right )_{i\in L} \label{tgt:4}\end{equation}
where
\( E_i(\eps ) = \eps B_i +   \textstyle {\frac 1{\ka -1 }} {\sumjni} C_j - C_i + \tfrac D{(\ka -1)A_\ka}, 
 i\in \mc L\). Moreover, we have $\grae O_{\eps} = O_{0}$ and
\( \grae  E_i(\eps ) =- C_i +    \tfrac 1{\ka -1 } {\sumjni} C_j +    \tfrac 1{\ka -1 } \tfrac D{A_\ka}, i \in \mc K \). Hence,
 \eqref{tgt:4} establishes convergence of $\left (D_i(\eps)\right )_{i\in L}$, as $\eps \to 0$. Since the sum in \eqref{tgt:2} does not depend on $\eps$ and $A_i$s are nonzero, this implies convergence of all $D_i(\eps)$s and thus completes the proof.
  \end{proof}

\begin{prop}\label{tgt:prop1} For any set $\mathpzc p$ of non-negative parameters $a_i,b_i,c_i$ with $b_i>0$, the operator \emph{$\aso$} of Section \ref{sec:sso} is a Feller generator. \end{prop}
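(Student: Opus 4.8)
The plan is to verify the Hille--Yosida conditions for $\aso$ by constructing, for each $\lam>0$, the resolvent $(\lam-\aso)^{-1}$ explicitly as a bounded, positive, contraction-in-the-supremum-norm operator on $\F$ whose range is all of $\F$ and is dense. The underlying one-dimensional building block is familiar: on a single copy of $[0,\infty)$, the general solution of $\lam g - \tfrac12 g'' = h$ (wait --- here the generator is $f\mapsto f''$, so actually $\lam g - g'' = h$) that stays bounded as $x\to\infty$ is $g(x) = g_p(x) + Ce^{-\slam x}$, where $g_p$ is the particular solution built from the Green's function of $\lam - \tfrac{d^2}{dx^2}$ on the half-line with, say, a Neumann datum at $0$, and $C\in\R$ is a free constant to be fixed by the transmission condition \eqref{intro:1}. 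So first I would write down, for given $\cfk = (f_i)_{i\in\mc K}\in\F$, the candidate $g_i(x) = u_i(x) + C_i e^{-\slam x}$ where $u_i$ solves $\lam u_i - u_i'' = f_i$ with $u_i'(0)=0$ and $u_i$ bounded, and record the values $u_i(0)$ and the fact that $g_i'(0) = -\slam C_i$, $g_i''(0) = \lam g_i(0) - f_i(0) = \lam(u_i(0)+C_i) - f_i(0)$.

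Next I would substitute these into the boundary/transmission conditions \eqref{intro:1}. This turns \eqref{intro:1} into a linear system for the unknowns $C_i$, $i\in\mc K$: collecting terms, each equation reads
\begin{equation*}
a_i\bigl(\lam(u_i(0)+C_i)-f_i(0)\bigr) + b_i\slam C_i = c_i\Bigl[\tfrac1{\ka-1}\sum_{j\neq i}(u_j(0)+C_j) - (u_i(0)+C_i)\Bigr].
\end{equation*}
Rearranging, this is exactly a system of the shape \eqref{tgt:1} of Lemma~\ref{lemik}, after dividing through appropriately: the coefficient of $C_i$ on the left is $a_i\lam + b_i\slam > 0$ (here is where $b_i>0$, or at least $b_i\slam+a_i\lam>0$, is used to play the role of the strictly positive $A_i$), the role of $\eps$ is played by a fixed positive constant, the $C_j$'s are the $D_j(\eps)$'s, and the data $f_i(0), u_i(0)$ assemble into the $B_i, C_i$ of the lemma. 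Hence Lemma~\ref{lemik} (applied with a fixed $\eps$, e.g. $\eps=1$, after rescaling) gives existence and uniqueness of the solution $(C_i)_{i\in\mc K}$, so $g := (g_i)_{i\in\mc K}\in\dom{\aso}$ is well-defined and satisfies $(\lam - \aso)g = \cfk$. This shows $\lam - \aso$ is surjective; injectivity follows the same way since the homogeneous system ($f_i\equiv0$, whence $u_i\equiv0$) has only the trivial solution by the uniqueness half of Lemma~\ref{lemik}.

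It then remains to check that the resolvent so constructed is a positive contraction, i.e. that $\|(\lam-\aso)^{-1}\|\le\lam^{-1}$ and $(\lam-\aso)^{-1}$ maps nonnegative functions to nonnegative functions; together with the fact that $\dom{\aso}$ is dense in $\F$ (clear, since it contains, e.g., smooth compactly-supported-away-from-$0$ perturbations spanning enough functions, or more cleanly because the range of $\lam-\aso$ is everything) this yields, by the Hille--Yosida--Ray theorem for Feller generators, that $\aso$ generates a Feller semigroup. The positivity/contractivity I would get from a maximum-principle argument: if $h := (\lam-\aso)g \ge 0$ and $g$ attained a negative minimum, it would be attained either in the interior of some edge --- impossible since there $\lam g(x) = g_i''(x) + h_i(x) \ge g_i''(x) \ge 0$ at an interior min of a $C^2$ function forces $g_i(x)\ge0$ --- or at some $(i,0)$ or at $\infty$; the behavior at $\infty$ is controlled because $g_i(\infty)$ is determined by $f_i(\infty)/\lam \ge 0$, and the case of a negative minimum at a boundary vertex $(i,0)$ is excluded by feeding the sign information $g_i'(0)\ge 0$, $g_i''(0)\le\lam g_i(0)$ back into \eqref{intro:1} and using $c_i\ge0$, $a_i\ge0$, $b_i>0$ to derive a contradiction (the right-hand side would be $\le 0$ while the left-hand side would be $\ge b_i g_i'(0)\ge 0$, pinning things down). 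The standard $\lam\|g\|\le\|h\|$ contraction estimate is the two-sided version of the same argument applied to $g\mp\lam^{-1}\|h\|$.

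The main obstacle I anticipate is \emph{not} the generation mechanics --- those are routine once Lemma~\ref{lemik} is in hand --- but rather the careful bookkeeping needed to massage the substituted boundary conditions into \emph{exactly} the normalized form \eqref{tgt:1}, since the lemma was evidently tailored for this purpose but the matching of constants (the division by $a_i\lam+b_i\slam$, the identification of $\eps$, the precise definitions of $B_i$ and $C_i$ in terms of $\lam$, $f_i(0)$, $u_i(0)$) has to be done without sign errors; a secondary subtlety is confirming that the constructed $g_i$ genuinely lies in $\cerp$ with $g_i''\in\cerp$ --- i.e. that $u_i$ and its second derivative have finite limits at $\infty$ --- which follows from the explicit half-line Green's function representation but should be stated. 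I would also remark that in the degenerate reading where one wants $\lam\to0$ or $b_i\to0$ the positivity of $A_i=a_i\lam+b_i\slam$ could fail, but since we only need a single fixed $\lam>0$ and $b_i>0$ is assumed, $A_i>0$ holds and the argument goes through.
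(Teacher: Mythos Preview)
Your approach is essentially the same as the paper's: both reduce the resolvent equation $\lam f - f'' = g$ on each edge to the general bounded solution with one free constant per edge, and both feed the transmission conditions \eqref{intro:1} into Lemma~\ref{lemik} (with $\eps=1$ and $A_i = c_i^{-1}(a_i\lam + b_i\slam)>0$) to solve the resulting linear system uniquely. The only packaging difference is that the paper does not re-derive the contraction/positivity estimate: it simply observes that $\aso$ is densely defined and satisfies the positive-maximum principle (citing \cite{konkusSIMA}), and then invokes a standard Feller-generation criterion (\cite{kallenbergnew}*{Thm.~19.11}) so that surjectivity of $\lam-\aso$ is the only thing left to check. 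Your plan to verify positivity and $\|\lam(\lam-\aso)^{-1}\|\le 1$ by a direct maximum-principle argument amounts to re-proving that same ingredient by hand.

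One small slip to fix in your boundary-vertex sketch: at a negative minimum located at $(i,0)$ you have $g_i'(0)\ge 0$ and, from the equation, $g_i''(0)=\lam g_i(0)-h_i(0)<0$, so the \emph{left} side $a_i g_i''(0)-b_i g_i'(0)$ of \eqref{intro:1} is $\le 0$, while the \emph{right} side $c_i\bigl[\tfrac1{\ka-1}\sum_{j\neq i}g_j(0)-g_i(0)\bigr]$ is $\ge 0$ (you had these reversed). Equality then forces $g_i'(0)=0$, $a_ig_i''(0)=0$, and (if $c_i>0$) $g_j(0)=g_i(0)$ for all $j$; one more step --- either iterating over $j$ or observing that the interior-minimum argument now applies --- closes the contradiction. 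Also note, as in the paper, the division by $c_i$ tacitly assumes $c_i>0$; the case $c_i=0$ decouples edge $i$ via a Robin condition and is handled separately (and trivially).
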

\begin{proof} $\aso$ is obviously densely defined, and arguing as, for example, in \cite{konkusSIMA}*{Sec. 6} we find out that this operator satisfies the positive-maximum principle. Hence, by \cite{kallenbergnew}*{Thm 19.11} or \cite{kniga}*{Thm. 8.3.4}, we are to check only that for any $g\in C(S_\ka)$ and $\lam >0$ there is an $f\in \dom{\aso}$ solving 
\( \lam f - \aso f = g, \)
that is, for any $\cgk\in \F$ and $\lam >0$ there is an $\cfk\in \dom{\aso}$ such that 
\( \lam f_i -  f_i'' = g_i, i \in \mc K\). 
(Such an $\cfk$ is unique, because $\aso$ satisfies the positive-maximum principle and is thus dissipative --- see \cite{ethier}*{Lemma 2.1, p. 165}.) 

To this end, we search for $f_i$s of the form 
\begin{equation}\label{tgt:6} f_i(x) \coloneqq C_i \e^{\slam x}+D_i \e^{-\slam x} - \tfrac{1}{\slam} \int_0^x \sinh \slam (x-y) g_i(y) \ud y, \ \  x \ge 0, i\in \mc K, \end{equation}
for some constants $C_i,D_i, i \in \mc K$, and note that the limits $\gra f_i(x)$ exist and are finite iff
\begin{equation} \label{tgt:7} C_i = \tfrac 1{2\slam}  \int_0^\infty \e^{-\slam y} g_i(y) \ud y, \qquad i \in \mc K.\end{equation}
Moreover, condition \eqref{intro:1} is satisfied iff 
\begin{equation}\label{tgt:8}
\gamma_i^+ D_i = \gamma_i^-C_i + a_i c_i^{-1} g_i(0) +  \textstyle {\frac 1{\ka -1 }} {\textstyle\sum\limits_{j \neq i }}(C_j +D_j ) - C_i - D_i, \qquad i \in \mc K\end{equation}
where
\( \gamma_i^\pm \coloneqq b_ic_i^{-1} \slam \pm \lam a_ic_i^{-1},  i \in \mc K. \) Since \eqref{tgt:8} is obviously a particular case of \eqref{tgt:1} with $\eps =1$, $A_i=\gamma_i^+$ and $B_i =  \gamma_i^-C_i + a_i c_i^{-1} g_i(0)$, existence of (unique) solution to this system is guaranteed by Lemma \ref{lemik}.
\end{proof}

Our second generation theorem is a simple case of the general result of \cite{kostrykin2012}*{Thm. 2.8} and \cite{kostrykin2}*{Thm. 3.7}, but we sketch its proof here since in what follows we need the form of the resolvent of $\aws$.

\begin{prop}\label{tgt:prop2}Operator $\aws$ is a Feller generator.\end{prop}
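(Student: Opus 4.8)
The plan is to argue exactly as in the proof of Proposition~\ref{tgt:prop1}. The operator $\aws$ is densely defined in $\Fze\cong C(\kak)$, and a routine check (as, e.g., in \cite{konkusSIMA}*{Sec.~6}) shows that it satisfies the positive-maximum principle; hence, by \cite{kallenbergnew}*{Thm~19.11} or \cite{kniga}*{Thm.~8.3.4}, it suffices to prove that for every $\cgk\in\Fze$ (so that in particular $g_i(0)=g_j(0)=:g(0)$ for all $i,j$) and every $\lam>0$ there is an $\cfk\in\dom{\aws}$ solving the resolvent equation $\lam f-\aws f=g$, that is, $\lam f_i-f_i''=g_i$, $i\in\mc K$. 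Uniqueness will again follow from dissipativity (\cite{ethier}*{Lemma~2.1, p.~165}).

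Next I would look for the $f_i$ in the form \eqref{tgt:6}. As in the proof of Proposition~\ref{tgt:prop1}, finiteness of the limits $\gra f_i(x)$ forces relation \eqref{tgt:7} for the constants $C_i$, which are thereby fixed by $g$. With the $C_i$ so chosen, the quantities entering the transmission conditions are $f_i(0)=C_i+D_i$, $f_i'(0)=\slam(C_i-D_i)$, and, since $f_i''=\lam f_i-g_i$, also $f_i''(0)=\lam(C_i+D_i)-g(0)$; moreover $f_i''\in\cerp$ because $f_i,g_i\in\cerp$. Hence, once the continuity condition $f_i(0)=f_j(0)$ is arranged, condition~(b) in the definition of $\dom{\aws}$ holds automatically. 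Continuity at the centre means precisely that $C_i+D_i$ equals one and the same number $v$ for every $i$, i.e. $D_i=v-C_i$, so the whole vector $(D_i)_{i\in\mc K}$ is governed by the single scalar $v$.

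It then remains to impose condition~(c), $\beta f''(0)=\sumi\alpha_i f_i'(0)$. Substituting $D_i=v-C_i$ turns it into
\[ \beta\bigl(\lam v-g(0)\bigr)=\slam\sumi\alpha_i(2C_i-v),\qquad\text{i.e.}\qquad \slam\Bigl(\beta\slam+\sumi\alpha_i\Bigr)v=\beta g(0)+2\slam\sumi\alpha_i C_i. \]
Since $\beta+\sumi\alpha_i=1$ with all parameters non-negative, $\beta\slam+\sumi\alpha_i>0$ for every $\lam>0$, so this equation determines $v$ uniquely, hence the $D_i=v-C_i$, and then \eqref{tgt:6} with these $C_i,D_i$ is the desired solution $f$; reading off $v$ and the $D_i$ yields at the same time the explicit form of $(\lam-\aws)^{-1}$ to be used later.

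I do not anticipate a genuine obstacle. The only points worth a moment's care are that the $C_i$ need not coincide even though the values $g_i(0)$ do, so that continuity of $f$ at the graph's centre is a real linear constraint rather than something inherited from $g$; and the verification of the positive-maximum principle, which runs exactly as for Proposition~\ref{tgt:prop1}.
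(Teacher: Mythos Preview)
Your proof is correct and follows essentially the same route as the paper: reduce to the range condition via the positive-maximum principle, take the ansatz \eqref{tgt:6}--\eqref{tgt:7}, and solve the resulting linear system for the $D_i$ by introducing the common value $v=f(0)=C_i+D_i$; your formula for $v$ agrees with the paper's $f(0)=\frac{\beta g(0)+2\slam\sumi\alpha_i C_i}{\lam\beta+\slam(1-\beta)}$. The only difference is cosmetic: you make explicit that condition~(b) follows from continuity at the centre via $f_i''=\lam f_i-g_i$, whereas the paper leaves this implicit.
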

\begin{proof} We proceed as in the proof of the previous proposition. First of all,  density of $\dom{\aws}$ and the positive-maximum principle for $\aws$ do  not pose a problem. Secondly, to find a solution to the resolvent equation we search for $f_i$s of the form \eqref{tgt:6} (with $C_i$s defined by \eqref{tgt:7}), and note that the defining conditions (b) and (c) are satisfied iff 
\begin{equation}\label{sec1:3} 
f_i(0)=f_j(0) \mquad{that is} C_i +D_i = C_j +D_j, \qquad i,j\in \mc K\end{equation}
and
\begin{equation}\label{sec1:4} 
\beta [\lam (C_i+D_i) - g(0) ] = \slam \sumj \alpha_j (C_j- D_j), \qquad i \in \mc K,\end{equation}  
respectively. Denoting by $f(0)$ the common value of \eqref{sec1:3} we check that the system \eqref{sec1:3}--\eqref{sec1:4} has the following unique solution: \(
D_i = f(0) - C_i, i \in \mc K \) where \(
f(0) = \frac {\beta  g(0) + 2\slam \sumi \alpha_i C_i}{\lam \beta + \slam (1-\beta)}\).
This completes the proof. \end{proof}

\section{The main convergence theorem}
\subsection{A sketch of the theory of convergence of semigroups and bird's-eye view of our results}\label{sketch}
The main idea of the Trotter--Kato--Neveu convergence theorem \cite{abhn,engel,goldstein,pazy}, a cornerstone of the theory of convergence of semigroups \cite{knigazcup,bobrud}, is that convergence of resolvents of \emph{equibounded} semigroups in a Banach space $\mathsf F$, gives an insight into convergence of the semigroups themselves. Hence, in studying the limit of equibounded semigroups, say, $\sem{B_\eps}$, generated by the operators $B_\eps, \eps >0$ we should first establish existence of 
the strong limit 
\[ \rla \coloneqq \grae \rez{B_\eps}. \]
The general theory of convergence (see \cite{knigazcup}*{Chapter 8}) covers also the case in which, unlike in the  classical version of the Trotter--Kato--Neveu theorem, the (common) range of the operators $\rla, \lam >0$  so-obtained is \emph{not} dense in  $\mathsf F$, and stresses the role of  the so-called \emph{regularity space}, defined as the closure of the range of $\rla$: 
\begin{equation*} \freg \coloneqq cl (Range \rla) \subset \mathsf F. \end{equation*}
Namely, $\freg$ turns out to coincide with the set of $f\in \mathsf F$ such the limit $T(t)f \coloneqq \grae \e^{tB_\eps}f $ exists and is uniform with respect to $t$ in compact subintervals of $[0,\infty)$; then $\{T(t),\, t \ge 0\}$, termed the \emph{regular limit} of $\sem{B_\eps}, \eps >0$, is a strongly continuous semigroup in $\freg$.  

It should be stressed, though, that this statement does not exclude the possibility of the existence of $f\not \in \freg$ such that the strong limit $\grae \e^{tB_\eps}f$  exists for all $t\ge0$. Such \emph{irregular} convergence of semigroups, which is known to be always uniform with respect to $t$ in compact subsets of $(0,\infty)$ --- see \cite{note} or \cite{knigazcup}*{Thm 28.4} --- is not so uncommon, especially in the context of singular perturbations \cite{banmika,banalachokniga,knigazcup}, but needs to be established by different means.

In our first limit theorem (Theorem \ref{conv}) we fix the set $\mathpzc p$ of parameters $a_i,b_i,c_i$ of Section \ref{sec:sso} and consider operators $\asoe, \eps >0$ defined as $\aso$ with $\mathpzc p =\mathpzc p(\eps)$ obtained from $\mathpzc p$ by replacing all $c_i$s by $\eps^{-1}c_i$ and leaving the remaining parameters intact.  We prove that the regularity space for this family of semigroups of operators is $C(\kak)\subset C(S_\ka)$, and that the sticky Walsh's process is their regular limit. In Section \ref{sec:coo}, under additional assumption that all $a_i$ are zero, we will show $\asoe$ generate also equibounded cosine families, and therefore the limit $\grae \e^{t\asoe}f, t \ge 0$  exists also for $f \in C(S_\ka) \setminus C(\kak)$.  In Section \ref{sec:coc} we show that convergence of semigroups (and cosine families) on the regularity space is in fact uniform with respect to $t$, and that outside of this subspace the cosine families do not converge at all.

\subsection{The main theorem}
Let $\mathpzc p$ and $\mathpzc p (\eps), \eps >0$ be as in the preceding  section. In other words, elements of the domain of $\asoe$ satisfy 
the transmission conditions 
\begin{equation}\eps a_if_i''(0) - \eps b_i f_i'(0) = c_i  \Big [{ \textstyle \frac 1{\ka -1 }} {\sum\limits_{j \neq i}}f_j(0) - f_i(0) \Big], \qquad i \in \mc K, \label{tmt:1}\end{equation}
and we are interested in the strong limit of the semigroups $\semasoe$ generated by $\asoe,$ as $\eps \to 0$. 
As explained above, our first task is to prove existence of the limit of resolvents of $\asoe$. This is achieved in the following proposition. 

\begin{prop} \label{tmt:prop1} The limit \( \rla g \coloneqq \grae \rezasoe g\) exists for all $ g \in C(S_\ka)$ and $ \lam >0$, and the regularity space  for $\semasoe, \eps >0$ coincides with $C(\kak)$. Moreover, for $g \in C(\kak)$, $\rla g = \rezw g$ where $\mathpzc{q(p)}$ is the set of parameters given in  \eqref{intro:2}.   \end{prop}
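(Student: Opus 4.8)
The plan is to compute both resolvents explicitly using the ansatz \eqref{tgt:6}, already employed in the proofs of Propositions \ref{tgt:prop1} and \ref{tgt:prop2}, and then pass to the limit in the formulas for the coefficients. For the semigroup generated by $\asoe$, the resolvent $\rezasoe g$ is the function $\cfk$ of the form \eqref{tgt:6} with $C_i$ given by \eqref{tgt:7} (these do not depend on $\eps$, since they are forced by the requirement that $\gra f_i(x)$ be finite) and with $D_i = D_i(\eps)$ the unique solution to the system one obtains by substituting $\mathpzc p(\eps)$ for $\mathpzc p$ in \eqref{tgt:8}. Concretely, replacing $c_i$ by $\eps^{-1}c_i$ turns $\gamma_i^\pm = b_ic_i^{-1}\slam \pm \lam a_ic_i^{-1}$ into $\eps\gamma_i^\pm$, so the system for $(D_i(\eps))_{i\in\mc K}$ becomes exactly \eqref{tgt:1} with $A_i = \gamma_i^+ >0$, $B_i = \gamma_i^- C_i + a_ic_i^{-1}g_i(0)$, and $C_i$ (the constant of \eqref{tgt:1}) equal to our $C_i$ of \eqref{tgt:7}. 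Lemma \ref{lemik} then immediately gives existence and uniqueness of $D_i(\eps)$ for every $\eps>0$ and, crucially, the existence of the finite limits $D_i \coloneqq \grae D_i(\eps)$. Hence $\rla g \coloneqq \grae \rezasoe g$ exists pointwise, and in fact in $\F$-norm, since \eqref{tgt:6} depends continuously on the finitely many constants $C_i, D_i(\eps)$ uniformly in $x$.

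Next I would identify the limiting constants. From \eqref{tgt:2}--\eqref{tgt:3} in the proof of Lemma \ref{lemik}, the quantity $D \coloneqq \sumi \gamma_i^+ D_i(\eps) = \sumi B_i = \sumi(\gamma_i^-C_i + a_ic_i^{-1}g_i(0))$ is independent of $\eps$. Passing to the limit $\eps\to 0$ in \eqref{tgt:1} (with the above $A_i,B_i,C_i$) kills the left-hand side and yields, for the limit vector $(D_i)_{i\in\mc K}$, the relations
\begin{equation}\label{plan:1}
0 = \gamma_i^-C_i + a_ic_i^{-1}g_i(0) + \tfrac 1{\ka-1}\sumjni(C_j+D_j) - C_i - D_i, \qquad i\in\mc K.
\end{equation}
I claim \eqref{plan:1} forces $C_i+D_i$ to be independent of $i$ (so the limit function lies in $\Fze = C(\kak)$) and forces the Walsh transmission condition (c) with the parameters $\mathpzc{q(p)}$ of \eqref{intro:2}. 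The first point follows because, summing \eqref{plan:1} over $i$ kills the bracketed differences and, more usefully, subtracting two instances of \eqref{plan:1} shows the vector $(C_i+D_i)_i$ is a fixed point of the averaging-type map appearing on the right, whose only fixed points are constant vectors (this is where the irreducibility of the "jump to a uniformly random other edge" matrix enters; it is the same mechanism that made $I-O_0$ invertible in Lemma \ref{lemik}). Writing $f(0)$ for the common value $C_i+D_i$, \eqref{plan:1} collapses to $\gamma_i^- C_i + a_ic_i^{-1}g_i(0) = C_i + D_i - f(0) + (\text{stuff independent of }i)$... more precisely it rearranges to $\slam b_ic_i^{-1}(C_i - D_i) + \lam a_ic_i^{-1}(C_i+D_i) - a_ic_i^{-1}g(0) = \text{const}$; summing against the weights $c_i$ and using $\slam(C_i-D_i)=\slam C_i - \slam(f(0)-C_i) = 2\slam C_i - \slam f(0)$ together with $f_i'(0) = \slam(C_i-D_i)$, $\lam f_i(0) - g(0) = \lam f(0)-g(0) = f_i''(0)$, one recovers precisely $\beta f''(0) = \sumi \alpha_i f_i'(0)$ with $\alpha_i = db_ic_i^{-1}$, $\beta = d\sumj a_jc_j^{-1}$. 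Thus the limit $\rla g$ solves $\lam f - \aws f = g$ with parameters $\mathpzc{q(p)}$, and by uniqueness of that solution (Proposition \ref{tgt:prop2}) we get $\rla g = \rezw g$ for $g\in C(\kak)$.

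It remains to pin down the regularity space $\freg = cl(\mathrm{Range}\,\rla)$. Since $\rla g = \rezw g\in\dom{\aws}\subset\Fze$ whenever $g\in C(\kak)$, and by the resolvent identity/generation theorem $\mathrm{Range}(\rezw)$ is dense in $C(\kak)$, we get $C(\kak)\subseteq\freg$. For the reverse inclusion, note that for \emph{arbitrary} $g\in C(S_\ka)$ the limit function $\rla g$ still satisfies \eqref{plan:1} (the argument above used nothing about $g$), hence still has $C_i+D_i$ independent of $i$, i.e. $\rla g\in\Fze=C(\kak)$; therefore $\mathrm{Range}\,\rla\subseteq C(\kak)$ and $\freg\subseteq C(\kak)$. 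This gives $\freg = C(\kak)$ and completes the proof. The main obstacle I anticipate is the bookkeeping in the paragraph above: checking cleanly that \eqref{plan:1} is equivalent to "$(C_i+D_i)_i$ constant $+$ condition (c) with the stated $\alpha_i,\beta$", i.e. that the normalizing constant $d$ comes out exactly right. I would handle this by introducing $f(0)$ for the common value at the outset, eliminating $D_i = f(0)-C_i$ everywhere, and reading off $f_i'(0) = \slam(2C_i - f(0))$ and $f_i''(0) = \lam f(0) - g(0)$ directly from \eqref{tgt:6}--\eqref{tgt:7}, so that the remaining identity among the $c_i$-weighted sums is a one-line verification.
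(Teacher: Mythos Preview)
Your overall strategy matches the paper's, and the regularity-space paragraph is fine. The problem is equation \eqref{plan:1}: it is wrong. In \eqref{tgt:1} the term $B_i$ carries a factor of $\eps$, so when you let $\eps\to 0$ both sides $\eps A_iD_i(\eps)$ and $\eps B_i$ vanish, and the correct limiting relation is simply
\[
0=\tfrac{1}{\ka-1}\sum_{j\neq i}(C_j+D_j)-C_i-D_i,\qquad i\in\mc K.
\]
This already says $(C_i+D_i)_{i\in\mc K}$ is a fixed point of the averaging map, hence constant, so $\rla g\in\Fze$. But it carries \emph{no} trace of $B_i=\gamma_i^-C_i+a_ic_i^{-1}g_i(0)$, and therefore cannot by itself yield the Walsh condition. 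Indeed, your own computation shows the trouble: if \eqref{plan:1} held together with $C_i+D_i=f(0)$, you would get $B_i=0$ for every $i$, which is false in general.

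The Walsh condition comes instead from the identity you already wrote down correctly one line earlier: the $\eps$-independent invariant $\sumi\gamma_i^+D_i(\eps)=\sumi B_i$. Passing to the limit here and using $D_i=f(0)-C_i$, $f_i'(0)=\slam(C_i-D_i)$, and (for $g\in C(\kak)$) $f''(0)=\lam f(0)-g(0)$ gives
\[
\Big(\sumi a_ic_i^{-1}\Big)f''(0)=\sumi b_ic_i^{-1}f_i'(0),
\]
which after normalizing by $d^{-1}=\sumj(a_j+b_j)c_j^{-1}$ is exactly $\beta f''(0)=\sumi\alpha_i f_i'(0)$ with the parameters of \eqref{intro:2}. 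This is precisely what the paper does: the limit of the individual equations gives constancy of $f_i(0)$, while summing over $i$ \emph{before} dividing by $\eps$ gives the transmission condition. Once you separate these two steps, your proof is the paper's.
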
 

\begin{proof} Fix $g\in C(S_\ka)$. Function $f_\eps =\rezasoe g$ solves the resolvent equation for $\asoe$ and hence, as we know from the proof of Proposition \ref{tgt:prop1}, is of the form \eqref{tgt:6}--\eqref{tgt:7} with certain $D_i =D_i(\eps)$. More specifically, the vector of these coefficients is a unique solution to the system 
\begin{equation}\label{tmt:2}
\eps \gamma_i^+ D_i(\eps) = \eps \gamma_i^-C_i + \eps a_i c_i^{-1} g_i(0) +  \tfrac 1{\ka -1 } {\textstyle\sum\limits_{j \neq i }}(C_j +D_j(\eps) ) - C_i - D_i(\eps),\end{equation}
where, as in Proposition \ref{tgt:prop1},
\( \gamma_i^\pm \coloneqq b_ic_i^{-1} \slam \pm \lam a_ic_i^{-1}, i \in \mc K. \) We are thus dealing again with a  special case of \eqref{tgt:1}, and Lemma \ref{lemik} tells us that  the limits $D_i^0 \coloneqq \grae D_i(\eps), i \in \mc K$ exist and are finite. Since neither the first nor the third term in \eqref{tgt:6} depend on $\eps$, this establishes existence of the limit $\grae \rezasoe g$.  

To prove the second sentence in the proposition, we first let  $\eps \to 0$ in \eqref{tmt:2} to obtain
\begin{equation}\label{tmt:3} C_i +D_i^0 = \tfrac 1{\ka -1  }\sum_{j\not =i} (C_j+D_j^0), \qquad i \in \mc K. \end{equation}
This shows that  $f_i(0)=C_i+D_i^0$ does not depend on $i\in \mc K$, that is, that $\rla g $ belongs to $C(\kak)$. Next, we sum both sides of \eqref{tmt:2} over $i\in \mc K$ and divide by $\eps$.  This renders
\[ \sumi \big(b_ic_i^{-1} \slam + \lam a_ic_i^{-1} \big)D_i^0 =  \sumi \big(b_ic_i^{-1} \slam - \lam a_ic_i^{-1} \big)C_i
+ \sumi  a_ic_i^{-1} g_i(0).\]
If $g\in C(\kak)$, that is, if $g_i(0)$ does not depend on $i$, this relation can be rearranged as
\[ \sumi ( a_ic_i^{-1}) (\lam (C_i+D_i^0) - g (0)) = \slam \sumj  b_jc_j^{-1} (C_j-D_j^0). \] 
This means, however, that condition \eqref{sec1:4} is satisfied with $\alpha_i$s and $\beta$ specified in \eqref{intro:2}, because we know that $C_i +D_i^0$ does not depend on $i$. It follows that for the stated choice of parameters, $D_i$s of \eqref{sec1:3}--\eqref{sec1:4}, coincide with $D_i^0$s. This establishes $\rla g = \rezw g $ for $g\in C(\kak)$. 

As a by-product, the range of $\rla$ contains $\dom{\awsp}$, and since the latter is dense in $C(\kak)$ the closure of the range of $\rla$ contains $C(\kak)$. But we have already established that this range is contained in $C(\kak)$. Hence, the closure of the range of $\rla$ coincides with $C(\kak),$ as claimed.  \end{proof}



Proposition \ref{tmt:prop1}  says in fact more than it is visible on its surface. To wit, the fact that on the regularity space $C(\kak)$, $\rla $ coincides with $\rezw$ implies that $\awsp$ is the generator of the regular limit of $\semasoe$ --- see e.g. \cite{knigazcup}*{Thm. 8.1 and Corollary 8.3}, compare  \cite{kniga}*{Sections 8.4.3 and 8.4.4}. As an immediate corollary we obtain thus the following main theorem of this section.

\begin{thm}\label{conv} We have
 \[ \grae \e^{t\asoe} f = \e^{t\awsp} f, \qquad f \in C(\kak) \]
 with the limit uniform with respect to $t$ in compact subsets of $[0,\infty)$. 
\end{thm}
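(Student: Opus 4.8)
The plan is to obtain the statement as a direct application of the general convergence theory for equibounded semigroups with a possibly non-densely-defined limit pseudo-resolvent, exactly as indicated in the paragraph preceding the theorem. Proposition~\ref{tmt:prop1} has already done the analytic work --- existence of the limit resolvent, identification of the regularity space, and the computation of $\rla$ on it --- so all that remains is to verify the hypotheses of the abstract machinery and to read off the limit semigroup.

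\textbf{Step 1: equiboundedness.} By Proposition~\ref{tgt:prop1} each $\asoe$ is a Feller generator on $C(S_\ka)$, hence $\semasoe$ is a strongly continuous, positive, sub-Markovian semigroup and $\|\e^{t\asoe}\|\le 1$ for all $\eps>0$ and $t\ge 0$; the family is thus equibounded uniformly in $\eps$.

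\textbf{Step 2: the limit pseudo-resolvent.} By Proposition~\ref{tmt:prop1} the strong limit $\rla g=\grae\rezasoe g$ exists for every $g\in C(S_\ka)$ and $\lam>0$. Being a strong limit of resolvents, $(\rla)_{\lam>0}$ is a pseudo-resolvent, and the same proposition identifies its regularity space --- the closure of the range of $\rla$ --- with $C(\kak)\subset C(S_\ka)$, and shows that on $C(\kak)$ one has $\rla=\rezw$.

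\textbf{Step 3: invoke the abstract theorem and identify the limit.} Now I would apply the non-dense Trotter--Kato--Neveu theorem in the form of \cite{knigazcup}*{Thm.~8.1 and Corollary~8.3}: for an equibounded family of strongly continuous semigroups whose resolvents converge strongly to a pseudo-resolvent with regularity space $\freg$, the limit $T(t)f\coloneqq\grae\e^{t\asoe}f$ exists for every $f\in\freg$, uniformly for $t$ in compact subintervals of $[0,\infty)$, and $\{T(t),t\ge0\}$ is a strongly continuous semigroup on $\freg$ whose resolvent is the restriction of $\rla$ to $\freg$. Here $\freg=C(\kak)$, and on $C(\kak)$ the resolvent of the limit semigroup is $\rezw$, the resolvent of $\awsp$, which by Proposition~\ref{tgt:prop2} is a Feller generator on $C(\kak)$. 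Since a strongly continuous semigroup is determined by its resolvent, the regular limit coincides with $\sem{\awsp}$, i.e. $T(t)f=\e^{t\awsp}f$ for $f\in C(\kak)$, with the convergence uniform on compact $t$-intervals of $[0,\infty)$, as claimed.

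\textbf{Main obstacle.} At this stage there is essentially no remaining difficulty: the genuine work --- existence of the limit resolvent and the computation of the regularity space and of $\rla$ on it --- was carried out in Proposition~\ref{tmt:prop1} (which in turn rests on the linear-algebra Lemma~\ref{lemik}). The one subtlety worth flagging is that it is precisely the abstract theorem, together with membership of $f$ in the regularity space $C(\kak)$, that upgrades bare pointwise convergence to uniformity on compact subintervals of the \emph{closed} half-line $[0,\infty)$, rather than the merely-on-$(0,\infty)$ uniformity one would expect for irregular convergence off the regularity space.
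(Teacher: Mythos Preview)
Your proposal is correct and follows essentially the same route as the paper: the paper also deduces Theorem~\ref{conv} as an immediate corollary of Proposition~\ref{tmt:prop1} via the abstract non-dense Trotter--Kato--Neveu theorem \cite{knigazcup}*{Thm.~8.1 and Corollary~8.3}, noting that since $\rla$ coincides with $\rezw$ on the regularity space $C(\kak)$, $\awsp$ must be the generator of the regular limit. Your explicit verification of equiboundedness (via the Feller property) and of the pseudo-resolvent structure only spells out what the paper leaves implicit.
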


In view of the Trotter--Sova--Kurtz--Mackevi\u cius theorem \cite{kallenbergnew}, this result can be seen as expressing a convergence of the random processes involved. We note that A. Gregosiewicz in \cite{adas} has found a different proof of our theorem based on a decomposition of resolvents involved.

\section{Convergence outside of $C(\kak)$}\label{sec:coo}
\subsection{Introductory remarks}\label{sec:ir}
As already mentioned in Section \ref{sketch}, Theorem \ref{conv} need not tell the entire story: it may happen that $\grae \e^{t\asoe}f $ exists also for $f\in C(S_\ka)\setminus C(\kak)$, except that this limit cannot be uniform in compact subintervals containing $0$.  Such convergence of semigroups outside of regularity space can be deduced from the convergence of their resolvents provided that the semigroups enjoy additional regularity properties, such as being uniformly holomorphic --- see e.g. \cite{knigazcup}*{Chapter 31}. In \cite{aberman} it has been proved that, in the case of $\ka =2 $ and $a_1=a_2=0$, each $\aso$ is a cosine operator family generator, and all these families are formed by operators of norm not exceeding $5$. As a corollary,  in \cite{aberek} we show that in this case convergence spoken of in Theorem \ref{conv} extends beyond $f \in C(\kak)$.  

It is the purpose of this section to prove a similar result for general $\ka \ge 3$. Hence, in what follows we assume that $a_i=0$, $i\in \mc K$ and, without loss of generality, take $b_i=1, i\in \mc K,$ so that the entire generator 
$\aso$ is characterized by the vector $\mathpzc c= (c_1,\dots, c_\ka)\in \mathbb R^\ka$ of (positive) permeability coefficients; to stress this in what follows we write $\asoc$ instead of $\aso$.  As a result, transmission conditions \eqref{intro:1} take the form
 \begin{equation} f_i'(0) = c_i  \Big[ f_i(0)-  {{\textstyle \frac 1{\ka -1 }} \sum\limits_{j \neq i}} f_j(0)  \Big ], \qquad i \in \mc K.\label{coo:1} \end{equation}
We will show (see Theorem \ref{coo:thm1}) that each $\asoc$ generates a strongly continuous cosine family  $\{\Cos_{\asoc} (t), t \in \R\}$ such that 
\[ \|\Cos_{\asoc} (t)\| \le M = M(\mathpzc c), \qquad t \in \mathbb R, \] 
where $M(\mathpzc c)$ has the following property: for any $r>0$, $M(r\mathpzc c) = M(\mathpzc c)$. In particular, even though we cannot claim that (as in the case of $\ka =2$) there is a universal bound for all cosine families generated by $\asoc$s, we know that for fixed $\mathpzc c$ the operators $\asoce $, defined as $\asoc $ with $\mathpzc c$ replaced by $\eps^{-1} \mathpzc c$, generate cosine families that are bounded by a universal constant: 
\begin{equation}\label{coo:2} \|\Cos_{\asoce} (t)\| \le M(\mathpzc c), \qquad t\in \R, \eps >0.\end{equation}
Theorem \ref{conv} says that  the semigroups generated by operators $\asoce$ converge, as $\eps\to 0$, to the semigroup describing the Walsh's process with  parameters \begin{equation}\label{coo:3} \alpha_i \coloneqq \tfrac 1{c_i\sumj c_j^{-1}},\; i \in \mc K \mquad { and }  \beta\coloneqq 0.\end{equation}
To denote this simpler set of parameters we write $\alpha (\mathpzc c)$ instead of $\mathpzc {q(p)}$.

As explained in detail in \cite{aberek}, estimate \eqref{coo:2}, when combined with Proposition \ref{tmt:prop1}, implies 
the following second main result of our paper. 
\begin{thm} \label{coo:thm1}  \  
\begin{itemize}
\item [(a) ] For $f\in C(\kak)$, $\grae \Cos_{\asoce} (t) f = \Cos_{\awsa} (t)f$ uniformly in $t\in [0,t_0]$ for $t_0>0$.   
\item [(b) ] For $f \in C(S_\ka) \setminus C(\kak)$, $\grae \e^{t\asoce} f =\e^{t\awsa}f $  
  uniformly in $t \in (t_0^{-1}, t_0)$ for $t_0 >1$.   \end{itemize} 
\end{thm}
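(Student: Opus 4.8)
\textbf{Overall strategy.} The theorem is a combination of two convergence results of a rather different flavour: part (a) is convergence of \emph{cosine families} on the regularity space, and part (b) is \emph{irregular} convergence of semigroups off the regularity space. The plan is to deduce both from the abstract machinery already in place: Proposition \ref{tmt:prop1} (convergence of resolvents $(\lam - \asoce)^{-1} \to R_\lam$, with regularity space $C(\kak)$ and $R_\lam = (\lam - \awsa)^{-1}$ on $C(\kak)$) together with the uniform cosine-family bound \eqref{coo:2}, namely $\|\Cos_{\asoce}(t)\| \le M(\mathpzc c)$ for all $t\in\R$, $\eps>0$. The crucial point is that \eqref{coo:2} is exactly what upgrades resolvent convergence to the strong statements (a) and (b); this is the content of the abstract theorems on convergence of cosine families in \cite{knigazcup}*{Chapter 31} (and their application in \cite{aberek}). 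So the real work of the section is establishing \eqref{coo:2}, which is deferred to Theorem \ref{coo:thm1}'s companion estimate on stochastic matrices; here I only need to invoke it.

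\textbf{Step 1: reduce (a) to the abstract cosine convergence theorem.} Having Proposition \ref{tmt:prop1}, I know $(\lam-\asoce)^{-1} g \to R_\lam g$ for every $g \in C(S_\ka)$, and that on $C(\kak)$ the limit is $(\lam - \awsa)^{-1}$. Combined with the equiboundedness \eqref{coo:2}, the Trotter--Kato--Neveu-type theorem for cosine families (e.g. \cite{knigazcup}*{Chapter 31}, \cite{aberek}) gives that for $f$ in the regularity space $C(\kak)$, $\Cos_{\asoce}(t)f$ converges, uniformly for $t$ in compact intervals, to the cosine family generated by the operator whose resolvent is $R_\lam$ restricted to $C(\kak)$, i.e. $\Cos_{\awsa}(t)f$. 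This is precisely (a). One must check that $\awsa$ is itself a cosine-family generator — but this follows from the uniform bound \eqref{coo:2} by passing to the limit, or is covered by the general theory, since a strong limit of equibounded cosine families is a cosine family.

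\textbf{Step 2: deduce (b) — irregular convergence off $C(\kak)$.} For $f \in C(S_\ka)\setminus C(\kak)$ the semigroups cannot converge uniformly near $t=0$ (that is the whole point of the regularity space), but the uniform-boundedness of the \emph{cosine} families forces a weaker form of convergence. The mechanism, as recalled in Section \ref{sketch} (see \cite{note}, \cite{knigazcup}*{Thm.\ 28.4}), is that irregular convergence of semigroups is automatically uniform on compact subsets of $(0,\infty)$; and the cosine-family bound \eqref{coo:2} provides enough holomorphy/smoothing of the associated semigroups (a bounded cosine family generator generates a bounded holomorphic semigroup on a sector) to guarantee that $\e^{t\asoce}f$ does converge for every $f\in C(S_\ka)$ and $t>0$. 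Invoking the relevant statement from \cite{aberek} (which is explicitly cited in the excerpt as the source of ``estimate \eqref{coo:2}, when combined with Proposition \ref{tmt:prop1}, implies the following''), the limit is $\e^{t\awsa}f$, with convergence uniform for $t$ in compact subintervals of $(0,\infty)$, i.e.\ uniform in $t\in(t_0^{-1},t_0)$ for each $t_0>1$. This is (b).

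\textbf{Main obstacle.} Within this section the genuine difficulty — and the reason the result is nontrivial for $\ka\ge 3$, unlike the $\ka=2$ case of \cite{aberman} — is hidden: it is the uniform cosine bound \eqref{coo:2}, equivalently the scaling-invariant bound $M(r\mathpzc c)=M(\mathpzc c)$. This requires controlling the norms of the operators $\Cos_{\asoc}(t)$, which via the resolvent representation \eqref{tgt:6}--\eqref{tgt:7} and the solution \eqref{tgt:4} of the linear system reduces to bounding the growth of certain stochastic (or substochastic) matrices built from the permeability coefficients $\mathpzc c$; this is flagged in the excerpt (``An estimate of the growth of stochastic matrices involved constitutes a key element''). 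Assuming that estimate, Theorem \ref{coo:thm1} itself is then a clean application of the abstract convergence theory, exactly as carried out in \cite{aberek}.
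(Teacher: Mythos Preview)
Your proposal is correct and matches the paper's own treatment: the paper does not give a self-contained proof of Theorem~\ref{coo:thm1} either, but states it as a direct consequence of Proposition~\ref{tmt:prop1} together with the uniform cosine bound~\eqref{coo:2}, deferring the abstract deduction to \cite{aberek}. You have correctly identified both ingredients and the fact that the genuine work lies in establishing~\eqref{coo:2} (done later via Lemma~\ref{coo:lem1} and Theorem~\ref{coo:thm2}); one minor bibliographic point is that the paper invokes \cite{knigazcup}*{Chapter~60} (not Chapter~31) for the cosine-family convergence on the regularity space, reserving Chapter~31 for the holomorphic-semigroup route to irregular convergence.
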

To elaborate on these succinct statements: first of all, $\awsa$ is also the generator of a cosine family in $C(\kak)$ (see our Section \ref{sec:coc} for more details). As such, despite not being densely defined in $C(S_\ka)$, it is also the generator of a semigroup $\{\e^{t\awsa},t \ge 0\}$ of operators in $C(S_\ka)$ --- see e.g. \cite{odessa}*{Corollary 5.1}. These operators are extensions of those defined in $C(\kak)$, and the semigroup generated by $\awsa$ is not strongly continuous in $t\in [0,\infty)$ but merely in $t\in (0,\infty)$ (of course, for $f\in C(\kak), \lim_{t\to 0+}\e^{t\awsa}f = f$). This clarifies statement (b). 

Condition (a) also requires a comment. Since we were able to extend convergence of semigroups from $C(\kak)$ to the entire $C(S_\ka)$, it may seem natural to ask weather the same can be done with convergence of the related cosine families. However, as proved in \cite{zwojtkiem} (see also \cite{knigazcup}*{Chapter 61}), cosine families by nature cannot converge outside of the regularity space. As applied to our case, this theorem tells us that (a) is the best result possible in the sense that  the limit cosine family cannot  be extended beyond $C(\kak)$ --- see also our Section \ref{sec:coc}.

Finally, we remark that the fact that each $\asoc$ is the generator of a cosine family can also be proved using decomposition of resolvent techniques (see \cite{adas}); however, this approach does not give a universal bound for the norm of $\Cos_{\asoce}(t), t\in \R, \eps >0$ (found in \eqref{coo:2}).

\subsection{Definition of $M(\mathpzc c)$} Given $\mathpzc c = (c_1,\dots, c_\ka)$  with all $c_i>0$ we think of the $\ka \times \ka$  intensity matrix  $Q=(q_{i,j})_{i,j\in \mc K}$ given by 
\begin{equation}\label{coo:4}
q_{i,j}= \begin{cases} -c_i,& i=j, \\  \frac{c_i}{\ka -1 },&  i\neq j
.\end{cases}
\end{equation}
The Markov chain  generated by  $Q$ is irreducible and its invariant measure is
$\alpha=(\alpha_i)_{i \in \mc K}$, where $\alpha_i$ are defined in \eqref{coo:3}. 
 
Let $c\coloneqq \max_{i \in \mc K} c_i$ and $Q_0\coloneqq c^{-1}Q$. We denote the entries of the matrix $\e^{tQ_0}$ by $p_{i,j}^0(t)$. 
Since $ Q_0+I_\ka$, where $I_\ka$ is the $\ka\times\ka$ identity matrix,  is a transition matrix of an irreducible and reversible discrete time Markov chain with the invariant measure $\alpha$, Corollary 2.1.5 in \cite{spectralgap} implies that 
\begin{align}
|p_{i,j}^0(t)-\alpha_j|\leq \e^{-\omega t} {\textstyle\sqrt{\frac{\alpha_j}{\alpha_i}}}= \e^{-\omega t}{\textstyle \sqrt{\frac{c_i}{c_j}}}, \qquad t\geq 0, \; i,j \in \mc K,\label{coo:5}
\end{align}
where the \emph{spectral gap} $\omega$ is the smallest non-zero eigenvalue of $-Q_0$. We note that $Q_0$ does not change if $\mathpzc c$ replaced by $r \mathpzc c$ where $r>0$, and thus neither does change the $\omega$.  It follows that 
the same applies to the  constant 
\begin{align}\label{coo:6}
M= M(\mathpzc c) \coloneqq \Big(1+ 2\max_{i \in \mc K}\frac{c_i}{c \omega } \sum_{j \in \mc K} \Big({\textstyle \sqrt{\frac{c_i}{c_j}}}+{\textstyle \frac{1}{\ka -1 }} \sum_{\ell \neq i} {\textstyle\sqrt{\frac{c_\ell}{c_j}}} \Big) \Big).
\end{align}

\subsection{Cosine families}\label{sec:cf} 
A strongly continuous family $\{C(t), t \in \R\}$ of operators in a Banach space $\mathsf F$ is said to be a cosine family iff $C(0)$ is the identity operator and 
\[ 2 C(t) C(s) = C(s+t) + C(t-s), \qquad t,s\in \R.\]
The generator of such a family is defined by 
\[ Af = \lim_{t\to 0} 2t^{-2}{(C(t)f- f)}\]
for all $f \in \mathsf F$ such the limit on the right-hand side exists. 
For example, in $\cer$, the space of continuous functions on $\R$ that have finite limits at $\pm\infty$,  there is the \emph{basic cosine family} given by  
\[ C(t) f(x) = \pol [ f(x+t) + f(x-t) ], \qquad x \in \R, t \in \R.\]
Its generator is the  one-di\-men\-sion\-al Laplace operator $f\mapsto f''$ with domain composed of twice continuously differentiable functions on $\R$ such that $f'' \in \cer$. 

Each cosine family generator is automatically the generator of a strongly continuous semigroup (but not vice versa). The semigroup  such operator generates is given by the \emph{Weierstrass formula} (see e.g. \cite{abhn}*{p. 219})
\[ T(0)f =f    \mquad {and} T(t) f  = {\textstyle \frac 1{2\sqrt{\pi t}}} \int_{-\infty}^\infty \e^{-\frac {s^2}{4t}} C(s) f  \ud s, \qquad t >0, f \in \mathsf F.\] 
This formula expresses the fact that the cosine family is thus (in this case) a more fundamental object than the semigroup, and properties of the semigroup can be hidden in those of the cosine family (see e.g. \cite{newfromold}). Moreover, semigroups that are generated by generators of cosine families are much more regular than other semigroups. In particular, such semigroups are 
holomorphic, but even among holomorphic semigroups there are those that are not generated by cosine family generators (see \cite{abhn} again).

\subsection{Lord Kelvin's method of images and the generation theorem}\label{gener_result}
For a number of Laplace operators in $\cerp$ with domains characterized by Feller--Wentzel boundary conditions at $x=0$ the cosine families they generate can be constructed semi-explicitly as  (isomorphic images of) subspace cosine families for the basic cosine families in $\cer$ --- see \cite{kosinusy,aberman} and references given there.  The trick, known as the \emph{Lord Kelvin method of images} \cite{kosinusy,feller}, comes down to noticing that each boundary condition unequivocally shapes extensions of elements of $\cerp$ to elements of $\cer$; for example, the Neumann and Dirichlet boundary conditions lead to even and odd extensions, respectively. These extensions form an invariant subspace for the basic cosine family, and the cosine family we are searching for turns out to be an isomorphic image of the basic cosine family as restricted to this subspace.

In this section we use the same idea to show that the operator $\asoc$  with transmission conditions \eqref{coo:1} generates a cosine family $\{\Cos_{\asoc}(t),\, t \in \R\} $  of operators  in $\F$: we will construct $\{\Cos_{\asoc}(t),\, t \in \R\} $ as an isomorphic image of a subspace cosine family of the  \emph{Cartesian product cosine family} $ \{\ced(t), t\in \R\} $ (`D' for `Descartes'). The latter is defined in the Cartesian product  space
\[ \G \coloneqq (\cer )^\ka , \]
(equipped with the maximum norm) by the formula 
\[ \ced (t) (f_i)_{i \in \mc K} = (C(t)f_i)_{i \in \mc K}, \qquad  \cfk \in \G, t\in \R .\]
This is to say that $\Cos_{\asoc} (t)$  will be found to be of the form 
\begin{equation} \Cos_{\asoc} (t)\cfk  = R\ced(t)\wcfk,  \qquad  \cfk\in \F, t \in \R,\label{coo:7} \end{equation}
where  $\wt{f_i} \in \cer, i \in \mc K$,  is a suitable extension of $f_i \in \cerp$
and  $R\colon \G \to \F$
is the  restriction operator
 assigning to a $(g_i)_{i \in \mc K} \in \G$ the member $(f_i)_{i\in \mc K}$ of $\F $ given by  $f_i = g_{i|\R_+}, i \in \mc K$. Our first lemma says that extensions $\wt{f_i}, i \in \mc K$ are determined uniquely by the fact that a cosine family leaves the domain of its generator invariant.

\begin{lem}\label{coo:lem1}
For $\cfk \in D(\asoc)$ there exists its unique extension $\wcfk \in \G $ such that 
\begin{equation*} 
R\ced(t)\wcfk \in D(\asoc) 
\end{equation*}
for $t \in \R$. Moreover, each  $\wt{f_i}, i\in \mc K$ belongs to the domain of the generator of the basic cosine family and \begin{equation} \label{coo:8} \| (\wt{f_i})_{i\in \mc K}\|_{\G} \le M \| (f_i)_{i\in \mc K}\|_{\F}, \end{equation}
where $M$ is defined in \eqref{coo:6}. \end{lem}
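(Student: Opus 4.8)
The plan is to reduce the problem of finding the extensions $\wt f_i$ to solving, for each fixed pair $(i,t)$, a transmission condition of the form \eqref{coo:1}, and to show that the required extensions must be of the Lord-Kelvin type: on the $i$th copy, $\wt f_i$ must equal $f_i$ on $[0,\infty)$ and, on $(-\infty,0]$, be built out of reflected copies of the various $f_j$'s with reflection coefficients dictated by the matrix $Q$. Concretely, I would first observe that applying $R\,\ced(t)$ to any extension $\wcfk$ produces, on the $i$th edge, the function $x\mapsto \tfrac12\bigl(\wt f_i(x+t)+\wt f_i(x-t)\bigr)$ for $x\ge 0$; for this family (in $t$) to stay inside $D(\asoc)$, every $\wt f_i$ must belong to the domain of the generator of the basic cosine family (i.e.\ be $C^2$ with $\wt f_i''\in\cer$), and the transmission conditions \eqref{coo:1}, which involve only the values and first derivatives at $0$, must hold for the cosine-transformed functions for all $t$. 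Differentiating the transmission relations in $t$ and evaluating at $t=0$ (using that $\ced(t)$ leaves $D(\asoc)$ invariant, together with the cosine functional equation) should convert the single requirement ``$R\ced(t)\wcfk\in D(\asoc)$ for all $t$'' into a closed linear system relating the left-hand derivatives $\wt f_i'(0^-)$ and values to the right-hand data $f_i(0),f_i'(0)$; here one uses that for $x\ge 0$ small and $t$ near $x$ the cosine average ``sees'' the negative half-line.

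Next I would write $\wt f_i(x)=f_i(x)$ for $x\ge 0$ and make the ansatz, for $x\le 0$,
\[
\wt f_i(x) \;=\; \sum_{j\in\mc K} r_{i,j}\, f_j(-x) \;+\; (\text{correction ensuring }C^2\text{ matching at }0),
\]
and determine the coefficients $r_{i,j}$ from the transmission conditions. The $C^0$ and $C^2$ matching at $0$ (needed so that $\wt f_i\in\cer$ and $\wt f_i''\in\cer$) forces $\sum_j r_{i,j}f_j(0)=f_i(0)$ and an analogous relation for second derivatives, while the first-derivative jump is exactly what \eqref{coo:1} prescribes; solving this shows the matrix $(r_{i,j})$ is (up to normalization) built from $Q$ — in fact the natural guess is $R_{\mathrm{mat}} = I - 2\operatorname{diag}(c_i)^{-1}Q'$-type expression, i.e.\ the reflection coefficients are $1-\tfrac{2}{\ka-1}$-weighted off-diagonal pieces of $Q$, so that the whole construction is governed by the stochastic matrix $Q_0+I_\ka$. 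Uniqueness then follows because the transmission conditions together with the regularity requirement pin down all boundary data, and a function in the domain of the basic-cosine-family generator is determined by its restriction to a half-line plus its boundary jet — more cleanly, any two such extensions differ by a function that lies in the domain, restricts to $0$ on all the $[0,\infty)$ copies, and is cosine-invariant, hence is identically $0$ by a d'Alembert/propagation argument (the cosine family transports the zero on $[0,\infty)$ across $0$).

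Finally, the norm bound \eqref{coo:8}. Once the reflection structure is in place, $\wt f_i$ on $(-\infty,0]$ is an explicit linear combination, with coefficients read off from iterates of $Q_0+I_\ka$, of reflected pieces of the $f_j$; summing the geometric-type series of these iterates and invoking the spectral-gap estimate \eqref{coo:5} — which controls $|p^0_{i,j}(t)-\alpha_j|$ by $\e^{-\omega t}\sqrt{c_i/c_j}$ — bounds $\sup_x|\wt f_i(x)|$ by $\|(f_j)_{j}\|_{\F}$ times precisely the sum appearing inside the parentheses in \eqref{coo:6}, after accounting for the extra factor $\max_i c_i/(c\omega)$ coming from integrating the exponential decay and the off-diagonal weights $\tfrac1{\ka-1}$. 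This is exactly the definition of $M(\mathpzc c)$, giving \eqref{coo:8}.

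\emph{Main obstacle.} The delicate point is the step that turns ``$R\ced(t)\wcfk\in D(\asoc)$ for all $t\in\R$'' into an explicit, solvable description of $\wt f_i$ on the negative half-line: one must show that invariance for \emph{all} $t$ (not just infinitesimally) genuinely forces the reflected-combination form, and one must do the bookkeeping so that the matrix appearing is conjugate to $\e^{tQ_0}$, so that \eqref{coo:5} applies and reproduces the constant \eqref{coo:6} rather than a weaker bound. I expect the uniqueness-and-propagation argument and the precise matching of constants to be where the real work lies; the rest is the standard Lord-Kelvin template of \cite{kosinusy,aberman}.
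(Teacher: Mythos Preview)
Your proposal has a genuine structural gap in the description of the extension. The ansatz
\[
\wt f_i(x)\;=\;\sum_{j\in\mc K} r_{i,j}\,f_j(-x)\;+\;(\text{correction}),\qquad x\le 0,
\]
with \emph{constant} reflection coefficients $r_{i,j}$ cannot work for the Robin-type transmission conditions \eqref{coo:1}. To see why, write $g_i(t)\coloneqq \wt f_i(-t)$ for $t\ge 0$ and impose the requirement $R\ced(t)\wcfk\in D(\asoc)$ literally: computing the derivative at $x=0$ of $x\mapsto\tfrac12[\wt f_i(x+t)+\wt f_i(x-t)]$ and inserting it into \eqref{coo:1} gives, for every $t\ge 0$,
\[
f_i'(t)-g_i'(t)\;=\;c_i\Big[f_i(t)+g_i(t)-\tfrac1{k-1}\sum_{j\ne i}\big(f_j(t)+g_j(t)\big)\Big].
\]
This is not a boundary condition at a single point but a first-order linear ODE system in $t$; in vector form $(g_i-f_i)'=Q(g_i-f_i)+2Q(f_i)$ with initial data $g_i(0)=f_i(0)$. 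Its unique solution is the \emph{convolution}
\[
(g_i(t))_{i\in\mc K}=(f_i(t))_{i\in\mc K}+2\int_0^t \e^{(t-s)Q}Q\,(f_i(s))_{i\in\mc K}\,\rd s,
\]
so $\wt f_i$ on the negative half-line depends on the full profile of all the $f_j$ on $[0,t]$, not just on a fixed linear combination of reflected values. Your step ``differentiate in $t$ and evaluate at $t=0$'' only recovers the boundary jet $(\wt f_i'(0^-))_i$, which is far from enough to determine $g_i$ on $(0,\infty)$; and plugging a constant-coefficient ansatz into the displayed relation forces an identity between $\sum_j r_{i,j}f_j'(t)$ and linear combinations of the $f_j(t)$, which fails for generic $f$ in the domain.

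Once you have the integral formula, the norm bound falls out differently from what you sketch: there is no geometric series in iterates of $Q_0+I_\ka$. Instead one writes the kernel entrywise as $p_{i,j}'(t-s)$, uses the Kolmogorov forward equation and \eqref{coo:5} (transferred to $\e^{tQ}$ via $p_{i,j}(t)=p^0_{i,j}(ct)$) to get $|p_{i,j}'(t)|\le c_i\,\e^{-c\omega t}\big(\sqrt{c_i/c_j}+\tfrac1{k-1}\sum_{\ell\ne i}\sqrt{c_\ell/c_j}\big)$, and then simply integrates this exponential in $t$. The factor $c_i/(c\omega)$ you correctly anticipate comes from $\int_0^\infty c_i\e^{-c\omega t}\rd t$, and the resulting bound is exactly $M(\mathpzc c)$ from \eqref{coo:6}. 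Uniqueness and the domain statement for $\wt f_i$ are then consequences of the ODE (uniqueness of the IVP) and of differentiating the integral formula, rather than of a propagation argument.
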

\begin{proof}
\textbf {Step I, existence and uniqueness of $\wt{f_i}, i\in \mc K$.} \textrm Our task is to find 
\begin{align}\label{coo:9} g_i(x)\coloneqq \wt{f_i}(-x), \qquad x\geq 0, i \in \mc K, \end{align} satisfying compatibility conditions $f_i(0)=g_i(0)$. Since $\ced(t)=\ced(-t), t\geq 0$, we must also have 
\begin{align*}
\frac{\textrm{d}}{\textrm{d}x} [\wt{f_i}(x-t)+\wt{f_i}(x+t)]_{|x=0} =c_i\Big[  \wt{f_i}(t)+\wt{f_i}(-t)-  {{\textstyle \frac 1{\ka -1 }} \sum\limits_{j \neq i}} \big(\wt{f_j}(t)+\wt{f_j}(-t)\big)\Big],
\end{align*}
that is
\begin{align}
f_i'(t)-g_i'(t) =c_i\Big[ {f_i}(t)+g_i(t)-  {{\textstyle \frac 1{\ka -1 }} \sum\limits_{j \neq i}} \big({f_j}(t)+g_j(t)\big)\Big], \label{coo:10}
\end{align}
 for $t\geq 0, i \in \mc K$.
This system can be rewritten as
\begin{align*}
(g_i-f_i)'_{i\in \mc K}=Q (g_i-f_i)_{i\in \mc K}+2Q\cfk,
\end{align*}
where $Q=(q_{i,j})_{i,j\in \mc K}$ is the intensity matrix defined in \eqref{coo:4}. 
Hence, $g_i$ are uniquely determined and (because of the compatibility condition) given by \begin{align}
(g_i(t))_{i\in \mc K}
&=(f_i(t))_{i\in \mc K}+2\int_0^t\big[\e^{(t-s)Q}Q\big](f_i(s))_{i\in \mc K}\textrm{d}s, \qquad t\geq 0. \label{coo:11}
\end{align}
From now on, we treat \eqref{coo:10} and \eqref{coo:11} as the definition of $( \wt{f_i})_{i\in \mc K}$. 


\textbf {Step II, an estimate for $p_{i,j}'(t)$.}
We note that $\e^{tQ}, t\geq 0$ is the matrix of transition probabilities, say, $p_{i,j}(t)$,  for the Markov chain with intensity matrix $Q$, whereas $Q\e^{tQ}=\e^{tQ}Q=\frac{\ud }{\ud t}\e^{tQ}= (p_{i,j}'(t))_{i,j\in \mc K}$. 
Moreover,
\begin{align*}
|p_{i,j}'(t)|&=\Big |\sum_{\ell \in \mc K} q_{i,\ell} p_{\ell,j}(t) \Big|=\Big|\sum_{\ell \neq i}\textstyle{ \frac{c_i}{\ka -1 }}p_{\ell,j}(t)-{c_i}p_{i,j}(t) \Big|\\
&\leq {\tfrac{c_i}{\ka -1 }}  \sum_{\ell \neq i} |p_{\ell,j}(t)-p_{i,j}(t)|\leq {\textstyle\frac{c_i}{\ka -1 }}   \sum_{\ell \neq i} (|p_{\ell,j}(t)-\alpha_j|+|p_{i,j}(t)-\alpha_j|) \end{align*}
for $t\geq 0$, $i, j \in \mc K$. Next, \eqref{coo:5} and $p_{i,j}(t)=p_{i,j}^0(ct), i,j \in \mc K$ imply that 
\begin{align}\label{coo:12}
|p_{i,j}(t)-\alpha_j|\leq \e^{-c\omega  t} {\textstyle \sqrt{\frac{c_i}{c_j}}}, \qquad t\geq 0, \; i,j \in \mc K.
\end{align}
This in turn renders
\begin{align}\label{coo:13}
|p_{i,j}'(t)|&= c_i\e^{-c\omega  t}\Big({\textstyle \sqrt{\frac{c_i}{c_j}}}+{\textstyle \frac{1}{\ka -1 }} \sum_{\ell \neq i} {\textstyle\sqrt{\frac{c_\ell}{c_j}}} \Big), \qquad t\geq 0,\; i,j \in \mc K.
\end{align}
The last two estimates will be of key importance in what follows.  

\textbf {Step III, each $\wt{f_i}$ belongs to $\cer$, and \eqref{coo:8} holds.} For our first claim in this step it suffices to show that $g_i \in \cerp$, and since continuity of $g_i$ is clear, it is enough to prove that the integral in \eqref{coo:11} converges, as $t\to \infty,$ to $\Pi m - m$ where $m\coloneqq (m_i)_{i\in \mc K}, m_i\coloneqq \grat f_i(t)$  and $\Pi m = (\sum_{j\in \mc K} \alpha_j m_j)_{i\in \mc K}$. Now, the last statement is true if $f_i(t)=m_i $ for $t\ge 0$ and $i\in \mc K$ because then the integral equals $\int_0^t \e^{(t-s)Q}Q m \ud s = \e^{tQ}m - m$ and \eqref{coo:12} implies that $\grat \e^{tQ} m = \Pi m$. Hence, we are left with showing that the integral in question converges to $0$ provided that $m=0$. 

To this end, we let $\R^\ka $ be equipped with the maximum norm. Then, \eqref{coo:13} shows that the norm of $\e^{tQ}Q$ as the operator in $\R^\ka$ can be estimated as follows:
\begin{equation}\label{coo:14} \|\e^{tQ}Q\| = \max_{i\in \mc K} \sum_{j\in \mc K} |p_{i,j}'(t)| \le  c M_0 \e^{-c\omega  t}  \qquad t\geq 0 
\end{equation}
where $M_0 \coloneqq \max_{i\in \mc K} \sum_{j \in \mc K} \Big({\textstyle \sqrt{\frac{c_i}{c_j}}}+{\textstyle \frac{1}{\ka -1 }} \sum_{\ell \neq i} {\textstyle\sqrt{\frac{c_\ell}{c_j}}} \Big)$.

Next, if $m=0$, given $\epsilon >0$ we can find a $t_0>0$ such that $|f_i(s)|< \epsilon, i \in \mc K $ as long as $s\ge t_0$. Hence, for $t> t_0$ the integral in question does not exceed $\int_0^{t_0} \|\e^{(t-s)Q}Q \| \, \|f\|_{\F}\ud s+ \epsilon \int_{t_0}^t  \|\e^{(t-s)Q}Q \| \ud s$. Since, by \eqref{coo:14} the first summand converges to $0$, as $t\to \infty$, and the second is bounded by $\frac {\epsilon M_0}{c\omega}$, our first claim follows.

As to the other claim,  \eqref{coo:13} implies also $
 \int_0^\infty |p_{i,j}'(t)| \textrm{d}t\leq {\frac{c_i}{c\omega}}\Big({\textstyle \sqrt{\frac{c_i}{c_j}}}+{\textstyle \frac{1}{\ka -1 }} \sum_{\ell \neq i} {\textstyle\sqrt{\frac{c_\ell}{c_j}}} \Big)
$. Thus, \eqref{coo:11} combined with  
\(
\int_0^t\big[\e^{(t-s)Q}Q\big](f_i(s))_{i\in \mc K}\textrm{d}s=\Big(\sum_{j \in \mc K} \int_0^t p_{i,j}'(t-s)f_j(s)\textrm{d}s\Big)_{i \in \mc K}
\)
shows that 
\begin{align}\|(g_i)_{i \in \mc K}\|\leq \Big(1+ 2\max_{i \in \mc K} \sum_{j \in \mc K}    \int_0^\infty |p_{i,j}'(t)|\textrm{d}t\Big) \le M \|\cfk\|. 
\end{align}
Hence,  \eqref{coo:8}  is established. 

\textbf {Step IV, $\wt{f_i}$ is in the domain of the generator of the basic cosine family.}
Since $f_i$ are twice continuously differentiable on $[0,\infty)$, \eqref{coo:11} shows that  so are  $g_i$,  $i\in \mc K$. Also, by differentiating \eqref{coo:11} we recover \eqref{coo:10}, which in turn, when evaluated at $t=0$ yields 
\(
g_i'(0) =f_i'(0) - 2 c_i\Big[ f_i(0)-  {{\textstyle \frac 1{\ka -1 }} \sum_{j \neq i}} \big({f_j}(t)\big)\Big]=-f_i'(0), i \in \mc K.
\)
This proves that the left-hand and right-hand derivatives of $\wt{f_i}$ agree at $t=0$, and we see  from \eqref{coo:10} that $\wt{f_i}$ is continuously differentiable on the entire $\R$.  The same relation reveals, furthermore, that $g_i'$ is continuously differentiable on $[0,\infty)$, since so is $f_i'$, and a little calculation using the already established $f'_i(0)=-g'_i(0)$ yields $f''_i(0)=g''_i(0)$ completing the proof that $\wt{f_i}$ is twice continuously differentiable.   

We are left with proving that the limits $\grat g_i''(t)$ exist and are finite. To this end, we note first that existence of finite limits $\grat f_i''(t)$ implies that $\grat f_i'(t) =0, i \in \mc K$. Furthermore, since, as we have established in Step III, $\grat (g_i(t))_{i\in \mc K} = m + 2(\Pi m - m)$, and \eqref{coo:10} can be rewritten as $ (g_i'(t))_{i\in \mc K} = (f_i'(t))_{i\in \mc K} - Q [(g_i(t))_{i\in \mc K}+ (f_i(t))_{i\in \mc K}]$, we obtain $\grat (g_i'(t))_{i\in \mc K}  = -2Q\Pi m =0$. Hence, differentiating \eqref{coo:10} once again shows that $\grat g_i''(t) $ exists and equals $\grat f_i''(t) $. \end{proof}


Lemma \ref{coo:lem1} tells us in particular that if \eqref{coo:7} is to define a cosine family generated by $\asoc$, there is but one choice for extensions $\wt{f_i}, i \in \mc K$. Hence, we introduce the extension operator 
\[E\colon   \F \ni\cfk \mapsto \wcfk \in \G, \] where, for all $\cfk\in \F$ (not just for $\cfk \in \dom{\asoc})$,   $\wt{f_i}, i \in \mc K$ are given by \eqref{coo:9} and \eqref{coo:11}. In terms of $E$, \eqref{coo:7} can be written as 
\begin{equation}\label{coo:16} \Cos_{\asoc} (t) = R C_D(t) E, \qquad t\in \R, \end{equation}
and, since,  by the lemma,  $\|E\|\le M$, and clearly $\|R\|\le 1$, we conclude that  $\|\Cos_{\asoc} (t)\| \le M, t \in \R$. 
 
\begin{thm}\label{coo:thm2}
Formula \eqref{coo:16} defines a strongly continuous cosine family in $\F$. Moreover, this cosine family is generated by  $\asoc$.
\end{thm}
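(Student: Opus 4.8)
The plan is to verify the three defining properties of a strongly continuous cosine family for $\{RC_D(t)E : t \in \R\}$ and then identify its generator with $\asoc$, leaning on the fact that $\{\ced(t), t\in\R\}$ is a genuine cosine family in $\G$ (being a Cartesian product of copies of the basic cosine family in $\cer$) and on the structure of $E$ and $R$ established in Lemma \ref{coo:lem1}. First I would record the algebraic skeleton: $R$ is the coordinatewise restriction $\G\to\F$, $E$ is the extension $\F\to\G$ from \eqref{coo:9} and \eqref{coo:11}, and the composition $RE$ is the identity on $\F$ because the compatibility condition $f_i(0)=g_i(0)$ forces $\wt f_i$ to agree with $f_i$ on $\R_+$; hence $\Cos_{\asoc}(0) = R\ced(0)E = RE = I$. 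Strong continuity in $t$ is immediate: $\ced(\cdot)$ is strongly continuous on $\G$, and $R,E$ are bounded ($\|R\|\le 1$, $\|E\|\le M$ by \eqref{coo:8}), so $t\mapsto RC_D(t)Ef$ is continuous for every $f\in\F$.

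The crux is the d'Alembert functional equation $2\Cos_{\asoc}(t)\Cos_{\asoc}(s) = \Cos_{\asoc}(t+s)+\Cos_{\asoc}(t-s)$. Here the naive manipulation $2RC_D(t)E\,RC_D(s)E$ does not telescope on its own, because $ER\ne I$ on $\G$. The key observation — which is exactly the point of the Lord Kelvin method — is that $E$ maps $\F$ into the subspace $\mathsf Y\subset\G$ on which $\ced(\cdot)$ is invariant, namely those $\wcfk$ arising as extensions of the common-compatibility type, and that on that invariant subspace $ER = I$. Concretely, I would argue: for $f\in\dom{\asoc}$, Lemma \ref{coo:lem1} says $\ced(s)Ef$ again lies in (the $E$-image of) $\dom{\asoc}$, i.e. $\ced(s)Ef = E(R\ced(s)Ef) = E\Cos_{\asoc}(s)f$; applying $\ced(t)$ and then $R$, and using the cosine identity $2\ced(t)\ced(s) = \ced(t+s)+\ced(t-s)$ valid in $\G$, gives
\[
2\Cos_{\asoc}(t)\Cos_{\asoc}(s)f = R\bigl(\ced(t+s)+\ced(t-s)\bigr)Ef = \Cos_{\asoc}(t+s)f+\Cos_{\asoc}(t-s)f .
\]
Since $\dom{\asoc}$ is dense in $\F$ and all operators in sight are bounded uniformly in $t,s$, the identity extends to all of $\F$. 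The main obstacle is therefore making rigorous the assertion $\ced(s)Ef = E\Cos_{\asoc}(s)f$ for $f\in\dom{\asoc}$; this is essentially the uniqueness clause of Lemma \ref{coo:lem1} — the extension of the element $\Cos_{\asoc}(s)f = R\ced(s)Ef \in \dom{\asoc}$ that keeps it in $\dom{\asoc}$ under all $\ced(\cdot)$ is unique, and $\ced(s)Ef$ is manifestly one such extension (apply $\ced(t)$ to $\ced(s)Ef$ and restrict: by the group-type relations for $\ced$ this stays of the required form). I would spell this out by invoking Lemma \ref{coo:lem1} directly rather than re-deriving \eqref{coo:11}.

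Finally, to identify the generator, let $A$ denote the cosine-family generator of $\{\Cos_{\asoc}(t)\}$ defined by $Af=\lim_{t\to0}2t^{-2}(\Cos_{\asoc}(t)f-f)$. For $f\in\dom{\asoc}$ with extension $\wcfk\in\G$, each $\wt f_i$ lies in the domain of the basic cosine family generator (Step IV of Lemma \ref{coo:lem1}), so $2t^{-2}(\ced(t)\wcfk - \wcfk)\to(\wt f_i{}'')_{i\in\mc K}$ in $\G$; applying the bounded operator $R$ and noting $\wt f_i{}''$ restricted to $\R_+$ equals $f_i''$, we get $Af = (f_i'')_{i\in\mc K} = \asoc f$. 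Thus $\asoc\subset A$. Conversely, both $\asoc$ and $A$ are generators (the former by Proposition \ref{tgt:prop1}, or its $a_i=0$ specialization; the latter by the first part of this theorem together with the standard fact that every cosine family has a generator), hence both $\lam-\asoc$ and $\lam-A$ are bijections of their domains onto $C(S_\ka)$ for large $\lam$; an operator that contains a bijective operator and is itself injective must equal it, so $A=\asoc$. This completes the proof.
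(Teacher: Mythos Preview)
Your proof is correct and follows essentially the same route as the paper's: you use the uniqueness clause of Lemma \ref{coo:lem1} together with the d'Alembert identity for $\ced$ to show $\ced(s)Ef = ERC_D(s)Ef$ on $\dom{\asoc}$, deduce the cosine equation there, extend by density, and identify the generator via $\asoc\subset A$ plus surjectivity of $\lambda-\asoc$. The only cosmetic differences are that you spell out $RE=I$ and $\Cos_{\asoc}(0)=I$ explicitly, and that your phrase ``group-type relations'' for $\ced$ should really read ``the cosine functional equation''---but the argument is the paper's argument.
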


\begin{proof}
Let $s\in\R$ and $\cfk\in D(\asoc)$. By Lemma \ref{coo:lem1}, $R C_D(t) E \cfk \in D(\asoc)$ for all $t \in \R$, and the cosine equation for  $\{C_D(t),t\in \R\}$ implies that
\[ R {C}_D(t){C}_D(s)E\cfk=\pol R C_D(t+s)E\cfk+\pol RC_D(t-s) E \cfk \]
belongs to $D(\asoc)$ for $t\in\R$.
By uniqueness of extensions for elements  of $D(\asoc)$, established in Lemma \ref{coo:lem1}, it follows that $C_D(s)E\cfk $ coincides with $E R C_D(s)E \cfk$. Hence, for all $t \in \R$ 
\begin{align*}
2 \Cos_{\asoc}(t)\Cos_{\asoc}(s) \cfk & = 2R{C}_D(t)[E R{C}_D(s)E]\cfk \\
&\hspace{-0.1cm}= 2R{C}_D(t){C}_D(s)E\cfk\\
&\hspace{-0.1cm}=R {C}_D(t+s)E\cfk+R{C}_D(t-s) E \cfk \\
&\hspace{-0.1cm}=\Cos_{\asoc}(t+s)\cfk+\Cos_{\asoc}(t-s)\cfk.
\end{align*}
Since $D(\asoc)$ is dense in $\F$, this proves that $\{\Cos_{\asoc}(t),t\in \R\}$ is a cosine family. The family is strongly continuous since so is the Cartesian product cosine family $\{C_D(t),t\in \R\}$.

Turning to the claim concerning the generator: Let $\cfk\in D(\asoc)$. By Lemma \ref{coo:lem1} each $\wt{f_i}$ belongs to the domain of the basic cosine family and thus we have 
\(\lim_{t \to 0} \tfrac{2}{t^2}[C(t)\wt{f_i} -\wt{f_i}]=\wt{f_i}'', i \in \mc K \)
in the sense of the supremum norm of $\cer$. 
It follows that \(\lim_{t \to 0} \tfrac{2}{t^2}[C_D(t)\wcfk -\wcfk ]=(\wt{f_i}'')_{i\in \mc K}\) in the norm of $\G$. 
Hence, 
$\cfk$ belongs to the domain of the generator, say, $G$, of the cosine family $\{\Cos_{\asoc}(t),t\in\R\}$, and 
$G\cfk=R (\wt{f_i}'')_{i\in \mc K} = (f_i'')_{i\in \mc K}= \asoc \cfk$. On the other hand, $D(\asoc)$ cannot be a proper subset of the domain of $G$ since  $\asoc$ is a Feller generator and, in particular, for $\lambda>0$, $\lambda-\asoc$ is injective and its range is $\F$  (see e.g. \cite{kniga} p.~267).\end{proof}

\begin{rem}A close inspection of the proof of Theorem \ref{coo:thm1} reveals that the space of extensions, that is, the range of $E$, is an invariant subspace for $\{C_D(t), t\in \R\}.$  As restricted to this range,  $\{C_D(t), t\in \R\}$   is a strongly continuous cosine family, and \eqref{coo:16} says that $\{\Cos_{\asoc}, t \in \R\}$ is an isomorphic image of this cosine family. \end{rem}

\section{Convergence of cosine families}\label{sec:coc}
 It may be thought unclear how do we know, in Theorem \ref{coo:thm1}, that $\awsa$  is a cosine family generator. The generation theorem for $\awsa$ can be proved directly, but can also be obtained as a by-product of Proposition \ref{tmt:prop1}, Theorem \ref{conv} and estimate \eqref{coo:2}. Indeed, as proved in \cite{knigazcup}*{Chapter 60}, if  \eqref{coo:2} holds,
Proposition \ref{tmt:prop1} implies existence of a strongly continuous cosine family in $C(\kak)$ given by $\Cos (t) f = \grae \Cos_{\asoce} (t)f , t \in \R, f \in C(\kak)$.  Denoting by $G$ the generator of this family we see also, by the Weierstrass formula, that $\grae \e^{t\asoce } f= \e^{tG}f, t \ge 0, f \in C(\kak)$. Hence, by Theorem \ref{conv}, we need to have  $G=\awsa$, showing that $\awsa$ is a cosine family generator.  

In this section, we want to argue that the abstract Kelvin formula \eqref{coo:16} provides an additional insight into both the convergence theorem (Theorem \ref{coo:thm1}) and the generation theorem for $\awsa$. Details are given in Theorem \ref{coc:thm1}, below. Its notations involve the projection operator $\Pi:\F \to \Fze$  given by 
\[ \Pi \cfk = \Bigl (\sum_{j\in \mc K} \alpha_j f_j\Bigr )_{i \in \mc K}\]
where $\alpha_i, i \in \mc K$ are defined in \eqref{coo:3}. We recall that in Step III of the proof of Lemma \ref{coo:lem1} we used $\Pi$ to denote the operator $\Pi:\R^\ka \to \R^\ka$ which formally acts in the same way as the one introduced here; below, notationally we will not distinguish between these two operators either. 

\begin{thm} \label{coc:thm1}   \   
\begin{itemize}
\item [(a) ] For $f\in C(\kak)$, $\grae \Cos_{\asoce} (t) f = \Cos_{\awsa} (t)f$ uniformly in $t\in \R$.    
\item [(b) ] For $f \in C(\kak)$, $\grae \e^{t\asoce} f =\e^{t\awsa}$  uniformly in $t\ge 0$.    
\item [(c) ] The cosine family generated by $\awsa$ is given by the abstract Kelvin formula 
\begin{equation}\label{coc:1} \Cos_{\awsa} (t) = R C_D (t) E, \qquad t \in \R, \end{equation}
where $E: C(\kak)(\overset{\text{iz}}= \Fze)\to \G $ maps a vector $\cfk\in \Fze$ to the vector $\wcfk\in \G$ of extensions of $f_i$s given by $\wt{f_i}(-x)=g_i(x), x \ge 0, i \in \mc K$ and 
\begin{equation}\label{coc:2} (g_i)_{i\in \mc K} = 2 \Pi \cfk - \cfk .\end{equation}
 \end{itemize} 
\end{thm}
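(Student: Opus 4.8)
The plan is to recognize that Theorem \ref{coc:thm1} is obtained by ``passing to the limit'' in the abstract Kelvin machinery of Section \ref{sec:coo} and combining it with the convergence statements already established. We begin with part (a). We know from \eqref{coo:2} that the cosine families $\{\Cos_{\asoce}(t),t\in\R\}$ are bounded by $M(\mathpzc c)$ uniformly in $\eps>0$, and from Proposition \ref{tmt:prop1} that the resolvents $\rezasoe$ converge strongly on $C(S_\ka)$ with regularity space $C(\kak)$. The general convergence theory for cosine families (\cite{knigazcup}*{Chapter 60}) then gives strong convergence $\Cos_{\asoce}(t)f\to\Cos(t)f$ for $f\in C(\kak)$; that this convergence is uniform with respect to $t$ on the \emph{whole} real line --- rather than merely on compact $t$-intervals --- is the genuinely new point, and it follows from the fact that equibounded convergent cosine families always converge uniformly in $t\in\R$ once they converge on the regularity space (this is exactly the phenomenon recorded in \cite{knigazcup}*{Chapter 60}, in contrast with semigroups, where one only gets uniformity on compacta). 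Identifying the limit cosine family's generator as $\awsa$ uses Theorem \ref{conv} together with the Weierstrass formula: the Weierstrass integral of $\Cos(t)$ reproduces $\grae\e^{t\asoce}f=\e^{t\awsp}f=\e^{t\awsa}f$, so $\Cos(t)=\Cos_{\awsa}(t)$. This also settles part (b): uniformity in $t\ge0$ of $\e^{t\asoce}f\to\e^{t\awsa}f$ on $C(\kak)$ is read off from uniformity of the cosine families via the Weierstrass formula (the Gaussian kernel integrates the uniform-in-$s$ bound).

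For part (c) the idea is to take the limit $\eps\to0$ directly in the abstract Kelvin formula \eqref{coo:16}, i.e. in $\Cos_{\asoce}(t)=RC_D(t)E_{\eps}$, where $E_\eps$ is the extension operator built from the intensity matrix $\eps^{-1}Q$. By Lemma \ref{coo:lem1} applied to $\asoce$, for $\cfk$ in the domain the extension is given through \eqref{coo:11} with $Q$ replaced by $\eps^{-1}Q$, namely $(g_i^{\eps}(t))_{i\in\mc K}=(f_i(t))_{i\in\mc K}+2\int_0^t[\e^{\eps^{-1}(t-s)Q}\eps^{-1}Q](f_i(s))_{i\in\mc K}\,\rd s$. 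A change of variables $u=\eps^{-1}(t-s)$ shows this integral equals $2\int_0^{\eps^{-1}t}\e^{uQ}Q\,(f_i(t-\eps u))_{i\in\mc K}\,\rd u$, and since $\e^{uQ}Q\to0$ exponentially (estimate \eqref{coo:14}) while $\int_0^\infty\e^{uQ}Q\,\rd u=\Pi-I$ on $\R^\ka$ (because $\e^{uQ}\to\Pi$ as $u\to\infty$, by \eqref{coo:12}), letting $\eps\to0$ with $f_i$ continuous yields $(g_i(t))_{i\in\mc K}\to (f_i(t))_{i\in\mc K}+2(\Pi-I)(f_i(t))_{i\in\mc K}=2\Pi(f_i(t))_{i\in\mc K}-(f_i(t))_{i\in\mc K}$, which is \eqref{coc:2} with the additional simplification --- valid precisely because now $\cfk\in\Fze$, so $f_i(0)=f_j(0)$ and $\Pi\cfk$ makes sense and is consistent with the compatibility condition $g_i(0)=f_i(0)$. (One checks $2\Pi\cfk-\cfk$ indeed has $i$th coordinate agreeing with $f_i$ at $0$: its value at $0$ is $2\sum_j\alpha_j f_j(0)-f_i(0)=2f_i(0)-f_i(0)=f_i(0)$ since $\sum_j\alpha_j=1$.) Thus $E_\eps\to E$ strongly with $E$ as described, and $\Cos_{\asoce}(t)=RC_D(t)E_\eps\to RC_D(t)E$ uniformly in $t$; comparing with part (a) identifies $RC_D(t)E=\Cos_{\awsa}(t)$, which is \eqref{coc:1}.

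An alternative, cleaner route to (c) that avoids any limiting argument is to verify directly that the right-hand side of \eqref{coc:1}, with $E$ defined by \eqref{coc:2}, is a strongly continuous cosine family on $C(\kak)$ whose generator is $\awsa$ --- exactly mirroring the proofs of Lemma \ref{coo:lem1} and Theorem \ref{coo:thm2}. The key points to check are: first, that $2\Pi\cfk-\cfk$ reflected to the left defines for each $\cfk\in\dom{\awsa}$ an extension $\wcfk\in\G$ with $R C_D(t)\wcfk\in\dom{\awsa}$ for all $t$, and that this is the unique such extension (here one uses conditions (b)--(c) in the definition of $\dom{\awsa}$ in place of \eqref{coo:1}, noting that the derivative jump $g_i'(0)=-f_i'(0)$ is what the condition $\beta f''(0)=\sum_i\alpha_i f_i'(0)$ with $\beta=0$ encodes after applying $C_D(t)$); second, that $E$ so defined is bounded (here trivially, since $\|\Pi\|\le1$, giving $\|E\|\le2$); third, the cosine functional equation, proved verbatim as in Theorem \ref{coo:thm2} by invoking uniqueness of extensions so that $C_D(s)E\cfk=ERC_D(s)E\cfk$; and fourth, the generator identification via the basic cosine family, as in Theorem \ref{coo:thm2}, using that $\lambda-\awsa$ is bijective onto $\Fze$ by Proposition \ref{tgt:prop2}.

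I expect the main obstacle to be the verification, in part (c), that $2\Pi\cfk-\cfk$ is genuinely the right extension, i.e. that reflecting it produces an element lying in the invariant subspace of $\{C_D(t),t\in\R\}$ corresponding to $\dom{\awsa}$ rather than to $\dom{\asoc}$ for some $\mathpzc c$ --- in other words, that the degenerate ``infinite permeability'' limit of the transmission conditions \eqref{coo:1} is precisely conditions (b)--(c) with $\beta=0$. Concretely this means checking that for $\cfk\in\Fze$ with the extension $g_i=2\sum_j\alpha_j f_j-f_i$ one has $g_i'(0)=-f_i'(0)$ and that the Walsh boundary condition $\sum_i\alpha_i f_i'(0)=0$ (the $\beta=0$ case) is equivalent to the statement $\sum_i\alpha_i(f_i'(0)-g_i'(0))=0$ being automatically satisfied --- which it is, since $f_i'(0)-g_i'(0)=2f_i'(0)$ forces $\sum_i\alpha_i f_i'(0)=0$, matching condition (c) exactly. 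Making this equivalence crisp, and confirming it is consistent with the $\eps\to0$ limit of the matrix identity $(g_i-f_i)'=\eps^{-1}Q(g_i-f_i)+2\eps^{-1}Q\cfk$ (which in the limit forces $Q[(g_i)+(f_i)]=0$, i.e. $(g_i+f_i)_{i\in\mc K}$ constant in $i$, i.e. $(g_i)_{i\in\mc K}=2\Pi\cfk-\cfk$ given the compatibility condition), is the one spot where care is needed; everything else is routine.
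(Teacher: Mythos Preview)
Your overall structure is right, but there is a genuine gap in part (a). You claim that the abstract theory of \cite{knigazcup}*{Chapter 60} yields convergence of equibounded cosine families uniformly in $t\in\R$ once convergence holds on the regularity space. This is not what that theory gives: it gives uniform convergence on compact $t$-intervals, which is precisely the content of Theorem \ref{coo:thm1}(a). The strengthening to \emph{all} of $\R$ in Theorem \ref{coc:thm1}(a) is not free, and the paper obtains it by a direct argument.

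The paper's route is in fact the one you sketch for (c): prove that $E_\eps f\to Ef$ in the sup norm of $\G$ for every $f\in\Fze$. Since $\|RC_D(t)\|\le 1$ uniformly in $t$, this immediately yields both (a) and (c) at once. Your change-of-variables computation is correct and gives the right pointwise limit, but your passage ``letting $\eps\to0$ with $f_i$ continuous'' is only a dominated-convergence argument for fixed $t$; it does not establish uniformity in $t\ge0$. The difficulty is near $t=0$: the tail $\int_{\eps^{-1}t}^\infty\e^{uQ}Q\,(f_i(t))\,\rd u$ has norm of order $\e^{-c\omega\eps^{-1}t}$, which is not uniformly small as $t\downarrow0$. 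The paper handles this by splitting $f$ as (constant at the center) $+$ (vanishing at the center): for constants the integral is identically zero (since $Q$ annihilates constant vectors), and for $f$ with $f_i(0)=0$ one controls small $t$ using $|f_i(s)|<\epsilon$ for $s<\delta$ and large $t$ using uniform continuity of the $f_i$ together with \eqref{coo:14}. This splitting is the missing ingredient in your argument.

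Your ``alternative, cleaner route'' --- verifying directly that $RC_D(t)E$ with $E$ given by \eqref{coc:2} is a cosine family generated by $\awsa$ --- is a legitimate and genuinely different approach to (c), not taken in the paper. It buys you (c) without any limiting procedure, but it does not by itself give the uniform-in-$t$ convergence of (a); for that you would still need to prove $E_\eps\to E$ in norm, which brings you back to the analysis above.
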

\begin{proof} Condition (b) is a direct consequence of (a) by the Weierstrass formula (see \cite{aberek} for details, if necessary). Also, by Theorem \ref{coo:thm2},  $\Cos_{\asoce} (t)$  is given by \eqref{coo:16} with $E=E(\eps)$ defined in \eqref{coo:9} and \eqref{coo:11} with $Q$ replaced by $\eps^{-1}Q$. 
Therefore, since we already know that the limit cosine family is generated by $\awsa$, to show (a) and (c) simultaneously it suffices to prove that, for any $\cfk$, 
\[ I_\eps (t) \coloneqq \eps^{-1} \int_0^t\big[\e^{s\eps^{-1}Q}Q\big](f_i(t-s))_{i\in \mc K}\textrm{d}s  \]
converges uniformly in $t\ge 0$ to $\Pi (f_i(t))_{i\in \mc K} - (f_i(t))_{i\in \mc K}$.  In the special case of $f_i (x) = const., x \ge 0, i\in \mc K$, this results is immediate, since then the integral is zero, and so is $\Pi (f_i(t))_{i\in \mc K}- (f_i(t))_{i\in \mc K}$. Therefore, we are left with proving this convergence for $\cfk$ such that $f_i(0)=0, i \in \mc K$. 

Under this assumption, given $\epsilon >0$ one can find a $\delta>0$ such that $|f_i(s)|< \epsilon, i \in \mc K$ as long as $s< \delta$. Therefore, by \eqref{coo:14}, for $t\le \delta$, $\|I_\eps (t) \|\le \frac {\epsilon  c M_0 }\eps  \int_0^t\e^{-\eps ^{-1}c\omega  s}\ud s  <  \frac {\epsilon M_0 }\omega$, and so \begin{equation}\label{coc:3}\|I_\eps (t) - \Pi (f_i(t))_{i\in \mc K} + (f_i(t))_{i\in \mc K}  \|\le (2 + \tfrac {M_0 }\omega) \epsilon, \qquad t \in [0,\delta], \eps >0. \end{equation}
Also, there is a $\delta_1$, and without loss of generality we can assume that $\delta_1 <\delta$, such that $|s|< \delta_1$ implies $|f_i(t-s) -f_i(t)|< \epsilon, i \in \mc K, t > \delta$, because $f_i, i \in \mc K$, being members of $\cerp$, are uniformly continuous. Hence, introducing $J_\eps (\delta_1,t) \coloneqq \eps^{-1} \int_0^{\delta_1}\big[\e^{s\eps^{-1}Q}Q\big](f_i(t))_{i\in \mc K} \ud s$, and arguing as above  we obtain
\(\|I_\eps (\delta_1) - J_\eps (\delta_1,t)\| \le \frac {\epsilon M_0}\omega .\)  
 At the same time, using estimate \eqref{coo:14} again, 
\( \|I_\eps (t) - I_\eps (\delta_1)\| \le M_0 \|\cfk\|_\F \e^{-\omega \eps^{-1} \delta_1}. \) Hence, for  $M_1\coloneqq \max (2 + \tfrac {M_0 }\omega, M_0 \|\cfk\|_\F),$ 
\begin{equation} \label{coc:4} \|I_\eps (t) -  J_\eps (\delta_1,t)\|\le M_1 (\epsilon + \e^{-\omega \eps^{-1} \delta_1}), \qquad t \ge \delta, \eps >0. \end{equation}
Finally, we have $J_{\eps}(\delta_1,t)= \e^{\eps^{-1} \delta_1 Q} (f_i(t))_{i\in \mc K}  - (f_i(t))_{i\in \mc K} $ and we know that (a) operators $\e^{\eps^{-1} \delta_1 Q} , \eps >0$ are contractions in $\R^\ka$ (and thus are in particular equibounded) and $\grae \e^{\eps^{-1} \delta_1 Q} v = \Pi v$ for any $v\in \R^\ka$, and (b) the set $K\coloneqq \{u\in \R^\ka ; v = (f_i(t))_{i\in \mc K} \text{ for some } t \ge \delta\}$ is compact in $\R^\ka$. It follows that for sufficiently small $\eps $ 
\[ \sup_{t\ge \delta} \|J_{\eps}(\delta_1,t) - \Pi (f_i(t))_{i\in \mc K}  + (f_i(t))_{i\in \mc K} \| \le \epsilon. \]
This, when combined with \eqref{coc:3} and \eqref{coc:4}, shows that 
\[ \limsup_{\eps \to 0} \sup_{t\ge 0} \|I_{\eps}(t) - \Pi (f_i(t))_{i\in \mc K}  + (f_i(t))_{i\in \mc K} \| \le (M_1 + 1)\epsilon. \]
Since $\epsilon$ is arbitrary, the proof is complete.
\end{proof}

We note that point (c) in the theorem just proved implies that each $\aws$ is a cosine family generator, as long as $\beta=0$. Indeed, given  $\alpha=(\alpha_i)_{i\in \mc K}$ with positive $\alpha_i$ such that $\sum_{i=1}^\ka \alpha_i$ we can take $\mathpzc c \coloneqq (\alpha_i^{-1})_{i\in \mc K}$ and then $\alpha =\alpha (\mathpzc{c}).$

It is also interesting to note that the proof of convergence of the integral $I_\eps (t)$ to $\Pi \cfk - \cfk$ carries out also to $\cfk \in C(S_\ka) \setminus C(\kak)$ except that than the limit function is not continuous at $t=0$, and therefore the limit cannot be uniform. Indeed, only in the first step of the proof, where we look at the case of constant functions, do we see a difference: in the case under consideration the constant depends on the edge, that is, we have $f_i(x)= u_i , x\ge 0, i \in \mc K$ where $u=(u_i)_{i\in \mc K}\in \R^\ka$. Hence, the integral equals $\e^{tQ}u - u$, and this, for $t>0$ converges to $\Pi u - u\not =0$, whereas $\e^{0Q}u - u=0$. Here is an immediate corollary to this remark.

\begin{cor}\label{coc:cor1} For $f \in C(S_\ka) \setminus C(\kak)$, 
the limit \( \grae \Cos_{\asoce} (t) f \) exists for no $t\not =0$.
\end{cor}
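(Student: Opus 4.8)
The plan is to leverage the abstract Kelvin formula \eqref{coo:16}, namely $\Cos_{\asoce}(t) = R\,C_D(t)\,E(\eps)$, and to trace through how the extension operator $E(\eps)$ behaves as $\eps\to 0$ on a function $\cfk\in C(S_\ka)\setminus C(\kak)$. By the remark immediately preceding the corollary, the integral $I_\eps(t)$ appearing in the definition of $E(\eps)$ converges, for each fixed $t>0$, to $\Pi\cfk - \cfk$, but at $t=0$ the decomposition into the ``constant part'' and the ``vanishing-at-$0$ part'' produces a jump: if $f_i(x)=u_i$ is locally constant near $0$ with $u=(u_i)_{i\in\mc K}$ not all equal, then $\e^{tQ}u - u \to \Pi u - u \neq 0$ for $t>0$ while the value at $t=0$ is $0$. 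Thus the pointwise limit of the extended functions $\wt{f_i}$ (on $(-\infty,0)$, via $g_i$) exists but is discontinuous at the origin, hence cannot be the restriction of any function in $\cer$.

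The key steps, in order, are as follows. First I would fix $\cfk\in C(S_\ka)\setminus C(\kak)$, so that the boundary values $f_i(0)$ are not all equal; write $u_i\coloneqq f_i(0)$ and $u=(u_i)_{i\in\mc K}$, so $u\notin\Fze$ as a vector in $\R^\ka$ and hence $\Pi u - u\neq 0$. Second, for a fixed $t\neq 0$ I would suppose, for contradiction, that $h\coloneqq\grae \Cos_{\asoce}(t)f$ exists in $\F$. Third, I would examine the value of this limit near the spatial point $x=0$ on each edge: since $\Cos_{\asoce}(t)\cfk = R\,C_D(t)\,E(\eps)\cfk$ and $C_D(t)$ is the Cartesian basic cosine family, the $i$th component of $\Cos_{\asoce}(t)\cfk$ evaluated at $x\in[0,|t|)$ involves $\tfrac12[f_i(x+|t|)+\wt{f_i}(x-|t|)]$, i.e.\ it sees the extended function $\wt{f_i}$ on an interval of $(-\infty,0)$ whose right endpoint approaches $0$ as $x\to|t|^-$ (for $|t|$ small; for larger $|t|$ one looks at appropriate points). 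Fourth, using the remark before the corollary applied to the ``constant near $0$'' decomposition, I would compute the pointwise limit $\grae\wt{f_i}(-s)=\grae g_i^\eps(s)$ for $s>0$ small: splitting $f_i = u_i + (f_i - u_i)$ where the second summand vanishes at $0$, the first contributes $(\e^{sQ}u - u)_i$ which tends to $(\Pi u - u)_i$ as $\eps\to0$ for each fixed $s>0$, while the second contributes the continuous-at-$0$ part converging to $(\Pi(f_i-u_i)-(f_i-u_i))$; the upshot is that $\grae g_i^\eps(s)$ exists for each $s>0$ but its limit as $s\to 0^+$ differs from $f_i(0)=\grae g_i^\eps(0)$ by the nonzero quantity $(\Pi u - u)_i$. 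Fifth, I would conclude that the alleged limit $h$, restricted to a neighbourhood of $x=0$ on the relevant edge(s), would have to equal a function with a genuine jump discontinuity at the point where the extended argument crosses $0$ — contradicting $h\in\F$, since every component of an element of $\F$ is continuous on $[0,\infty)$ (and the relevant spatial point is an interior point $x=|t|>0$ of the edge when $|t|$ is small, or is otherwise an interior point for suitable $t$).

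The main obstacle I anticipate is the bookkeeping of \emph{which} spatial point on \emph{which} edge to test, because $C_D(t)$ for $|t|$ not small only reaches the negative half-line through the term $\wt{f_i}(x-t)$ when $x<t$, and one must verify that for every $t\neq 0$ there is some $x\in[0,\infty)$ at which the limit of $\Cos_{\asoce}(t)\cfk$ fails to exist or is discontinuous as a function of $x$. For $0<|t|$ this is handled by choosing $x$ slightly less than $|t|$, so that $x-|t|$ is a small negative number and the discontinuity of $\lim_\eps \wt{f_i}$ at $0$ is activated; one should also check that the ``plus'' term $\tfrac12 f_i(x+|t|)$ is harmless (it is continuous in $x$ and independent of $\eps$). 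A secondary, purely technical point is to make the pointwise-in-$s$ convergence of $g_i^\eps(s)$ rigorous uniformly on a small interval $s\in[\delta_1,\delta]$ bounded away from $0$, which follows exactly as in the compactness argument of Step~III of the proof of Lemma \ref{coo:lem1} together with \eqref{coo:12}; the essential new content is merely that near $s=0$ the limit is $\Pi u - u\neq 0$ rather than $0$, which is precisely the obstruction to continuity and hence to convergence in $\F$.
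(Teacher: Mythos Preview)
Your proposal is correct and follows essentially the same route as the paper: the paper's proof is literally the one-line remark preceding the corollary, and your plan spells out exactly what that remark means---use the Kelvin formula $\Cos_{\asoce}(t)=R\,C_D(t)\,E(\eps)$, observe that the pointwise limit of the extensions $g_i^\eps(s)$ exists for each $s>0$ but has a jump at $s=0$ when $u=(f_i(0))_i$ is not constant, and conclude that the putative sup-norm limit $h$ would have to be discontinuous at the interior point $x=|t|$, a contradiction. There are a couple of harmless slips in your write-up (the exponent should be $\e^{s\eps^{-1}Q}$ rather than $\e^{sQ}$, the jump is $2(\Pi u-u)_i$ rather than $(\Pi u-u)_i$, and the discontinuity of $h_i$ sits at $x=|t|$, not near $x=0$), and your ``main obstacle'' about large versus small $|t|$ is a non-issue since $x=|t|>0$ is always an interior point of $[0,\infty)$; but none of this affects the argument.
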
 

This result, which complements Thm. \ref{coc:thm1} (a), provides a much more specific information than the general theorem of \cite{zwojtkiem}, which says simply that there is at least one $t\not =0$ for which the above limit does not exist.

\bibliographystyle{plain}
\bibliography{../../../bibliografia}

\end{document}